\documentclass[12pt]{elsarticle}
\usepackage{amssymb}
\usepackage{amsfonts}
\usepackage{amsmath}
\usepackage{fullpage}
\usepackage{mathrsfs}
\usepackage[toc,page]{appendix}
\usepackage{eurosym}

\setcounter{MaxMatrixCols}{10}

\makeatletter
\def\ps@pprintTitle{  \let\@oddhead\@empty
  \let\@evenhead\@empty
  \def\@oddfoot{\reset@font\hfil\thepage\hfil}
  \let\@evenfoot\@oddfoot
}
\makeatother
\newtheorem{theorem}{Theorem}

\newtheorem{condition}[theorem]{Condition}

\newtheorem{corollary}[theorem]{Corollary}

\newtheorem{definition}[theorem]{Definition}

\newtheorem{lemma}[theorem]{Lemma}

\newtheorem{remark}[theorem]{Remark}

\newenvironment{proof}[1][Proof]{\noindent\textbf{#1.} }{\ \rule{0.5em}{0.5em}}
\input{tcilatex}
\begin{document}

\begin{frontmatter}

\title{Dimension estimates for nonlinear nonautonomous systems in arbitrary normed spaces}

\author[label1]{John Ioannis Stavroulakis}

\address[label1]{School of Mathematics, Georgia Institute of Technology, Atlanta, GA
30332, USA}

\begin{abstract}
We calculate explicit estimates for the dimension of trajectories satisfying a certain growth bound. We generalize the classic results of Kurzweil by considering nonlinear nonautonomous and uniformly compact dynamical systems on normed spaces over arbitrary fields. We furthermore refine the results in the case of delay equations, greatly simplifying the relevant growth bounds.
\end{abstract}

\begin{keyword}
Attractors, Dimension, Nonarchimedean, Delay equation

\noindent
{\bf AMS subject classification:} 
	37L30; 34K12, 34K30, 37C35,	37C45, 37C60 
\end{keyword}

\end{frontmatter}

\section{\protect\bigskip \protect\bigskip Introduction}

The pioneering dimension estimates of \textquotedblleft
attractors\textquotedblright\ \ for the general case of a compact dynamical
system in an infinite dimensional space were those of Kurzweil \cite{kurz1},
Mallet-Paret \cite{mpdim}, and Yorke \cite{yorkedim}, all three studying the
problem from the perspective of delay equations. Yorke constructed examples
showing that, without smoothness restrictions, it is impossible to bound the
dimension of the attractor of general systems, as there exist continuous
functionals $G$ so that the attractor of 
\begin{equation*}
x^{\prime }(t)=G(x_{t}),
\end{equation*}%
is infinite dimensional. On the other hand, for the linear nonautonomous
case, Kurzweil calculated explicit bounds on the dimension of solutions of
linear maps in Banach spaces satisfying a certain growth estimate, under
uniform compactness. Mallet-Paret proved finiteness of Hausdorff dimension
of invariant sets for autonomous $\mathscr{C}^{1}$ maps in Hilbert spaces
possessing compact derivative. In other words, dimension estimates are
possible only in the linear case, or for systems which are approximately
linear (smooth). However, given linearity, dimension estimates are possible
both for autonomous and nonautonomous systems, using somewhat similar
techniques.

Mallet-Paret's results were subsequently generalized by Ma\~{n}\'{e} \cite%
{mane} and Douady and Oesterl\'{e} \cite{do}. The methods of \cite{do}
inspired a series of successive improvements and generalizations of
dimension estimates for invariant sets of autonomous systems, see e.g. \cite%
{cft,eden,eft,gt,t}. The current state-of-the-art for the autonomous case is
contained in \cite{th,tha}.

Investigations of the nonautonomous case (\cite{clr, cr,cv,haleinfinite,mik,
robin, t}) typically rely on the technique of Mallet-Paret \cite{mpdim} and
Ma\~{n}\'{e} \cite{mane}, reducing a nonautonomous problem to an autonomous
question of invariance, in a manner not dissimilar to the \textit{natural
extension} (cf. \cite[Appendix]{tha}) or the \textit{Howland lift }(on
which, see e.g. \cite{grs, how, lms} and the discussion in \cite[Chapter 3]%
{cl}). Quite similar methods of lifting were recently developed in \cite{bv}%
. Thus, the nonautonomous results so far demand rather restrictive
conditions on the global trajectories, and calculate bounds in a
nonconstructive, nonexplicit manner.

The present paper aims to complete the investigation of \cite{kurz1, kurz},
estimating the angular distance between unstable directions, thereby
providing a basis for dimension estimates in the nonlinear case. We extend
the results to a sequence of maps acting on a sequence of different vector
spaces, which are allowed to be over any field. So for example, our results
may be applied to dynamical systems on a vector space over the $p$-adics. We
furthermore precisely calculate the intimate relationship between the
valuation of the underlying field, the rate of growth, the dimension, and
\textquotedblleft eigenvalues, singular values\textquotedblright\ of the
solution/shift maps.

The one restriction we impose is uniform bounds on these \textquotedblleft
eigenvalues\textquotedblright\ (for numerous versions of these growth
estimates see e.g. \cite{b,by,kurz1,p, qtz}), which is a natural assumption
equivalent to \textit{integral boundedness} (also sometimes referred to as%
\textit{\ translation boundedness}) for various systems. As an illustration,
we indicate how to easily refine the classical estimates of \cite{kurz} for
delay equations with integrally bounded parameters, as a matter of simple
rescaling.

This paper is structured as follows. In the following Section, we extend the
calculations of Kurzweil \cite{kurz1} to spaces over arbitrary fields,
furthermore bounding the distance between unstable directions. Subsequently,
we simplify and refine the growth estimates for delay equations.

\section{\protect\bigskip \protect\bigskip Dimension Estimates for
Nonautonomous Systems}

\subsection{Preliminaries and standing assumptions}

Let us begin by some methodological remarks. Attempting to tackle the
nonautonomous case similarly to Thieullen's work \cite{th,tha}, one faces
the problem of estimating the \textit{covering number}, similarly to \cite[%
p. 83]{th}. This requires bounding the norm of the linearization on a finite
collection of unstable directions, as well as a uniform bound for the
truncation/complement of this finite subspace. The former step is\ -at least
today- wellknown, utilizing the notion of a \textit{determinant} \cite%
{b,by,ghrs,gq,kurz1,qtz}. The latter, however, is considerably more
difficult, if not impossible to achieve without assuming lower bounds on the
determinant, or losing crucial generality in the freedom of choice of the
finite collection of unstable directions in question. \vskip1pt For this
reason, we here closely follow the method of \cite{kurz1} instead of \cite%
{th,tha}, firstly considering only finite dimensional unstable subspaces. In
a second step, we show that, given the existence of a finite family of
unstable directions, all other unstable directions will be in arbitrarily
small neighborhoods of it. \vskip1pt The Lemmata of this section are
straightforward extensions of \cite{kurz71, kurz72, kurz, kurz1, kurz73} to
the nonarchimedean case. We consider (infinite dimensional) normed vector
spaces over arbitrary fields. On nonarchimedean fields and vector spaces, we
refer the reader to \cite{dr,monna,schn,vr}. More recent variations on the
theme of determinants and volumes can be found in \cite{b,by,ghrs,gq,qtz}
for archimedean fields and in \cite{dis, be, bgm,loop} for the
nonarchimedean case. \vskip1pt The normed spaces are not necessarily
complete, and the case of the trivial valuation is not considered. The
symbol $\mathbb{K}$ will denote the relevant field. We study a given
sequence of normed linear spaces $Y_{n},n\in 
\mathbb{Z}
$ over a fixed field $\mathbb{K}$ and of linear operators $%
T_{n}:Y_{n}\rightarrow Y_{n+1},n\in 
\mathbb{Z}
$. We write $T^{n}:=T_{n}\circ ...\circ T_{0}$. We shall assume they
uniformly (wrt discrete time $n$) satisfy the following condition.

\begin{condition}
\label{Condition4.55}There exists a sequence of subspaces $X^{i}$ of $Y_{n}$
(which may be different for each $Y_{n}$), such that 
\begin{eqnarray*}
Y_{n} &=&X^{0}\supset ...\supset X^{i}\supset X^{i+1}\supset ... \\
\func{codim}X^{i} &\leq &k_{i}, \\
|T_{n}|_{X^{i}}| &\leq &\rho _{i},
\end{eqnarray*}%
where the integers $k_{i}$ satisfy $0=k_{0}<k_{1}<k_{2}<...<+\infty $ and $%
\rho _{i}$ are nonnegative reals. Here $T_{n}|_{X^{i}}$ denotes the
(set-theoretic) restriction of $T_{n}$ to $X^{i}$. Furthermore, we set%
\begin{equation*}
\rho _{\infty }:=\liminf_{s\rightarrow +\infty }\left[ \dprod%
\limits_{i=1}^{s-1}\rho _{i}^{(k_{i+1}-k_{i})}\right] ^{\frac{1}{k_{s}}},
\end{equation*}%
and assume 
\begin{equation*}
\rho _{\infty }<+\infty .
\end{equation*}
\end{condition}

Recall the notion of compactness (and closely related complete continuity)
of an operator as taking bounded sets to relatively compact sets. Requiring $%
\rho _{\infty }=0^{+}$ in the above condition is equivalent to \textit{%
uniform compactness} for operators in Banach spaces, cf. \cite{kurz1, serre}%
. Note also that, optimally defined, the sequence $\rho _{i}$ should be
decreasing. However, as we deal with the nonautonomous case, we consider
these bounds to be given extrinsincally instead of intrinsically (contrast
Lyapunov exponents in the autonomous case).

\subsection{Lemmata and proof of main Theorem}

\begin{definition}
\label{Definition7.19}Assume $V$ is a $\mathbb{K-}$linear normed space. If $%
\mathbb{K=}%
\mathbb{R}
$ or $%
\mathbb{C}
$, we write $S_{V}:=\{v\in V:|v|=1\}$. For general $\mathbb{K}$, there
exists a positive constant $\Theta $ (which may depend on $\mathbb{K}$),
such that 
\begin{equation}
\forall v\in V\backslash \{0\},|v|<\Theta ,\exists k\in \mathbb{K}^{\times
},|k|\geq \frac{1}{\Theta }\text{ s.t. }1\geq |kv|\geq \Theta  \label{12.25}
\end{equation}%
The value group $|\mathbb{K}^{\times }|$ is either dense in $(0,+\infty )$
(so we may take $\Theta $ to be any constant smaller than $1$, and
arbitrarily close to $1)$, or discrete and there exists a unique constant $%
\Theta \in (0,1)$ s.t. $|\mathbb{K}^{\times }|=\Theta ^{%
\mathbb{Z}
}$. We set%
\begin{equation*}
S_{V}:=\{v\in V:1\geq |v|\geq \Theta \}.
\end{equation*}
\end{definition}

\begin{definition}
Let $X,Y$ be finite dimensional normed $\mathbb{K-}$vector spaces, of the
same dimension $n$, and $\Lambda ^{n}X,\Lambda ^{n}Y$ denote the one
dimensional space of $n-$linear alternating ($\mathbb{K-}$valued) forms on $%
X,Y$ respectively, with the following norms:%
\begin{eqnarray*}
|\alpha | &=&\sup_{y_{i}\in Y,|y_{i}|\leq 1}|\alpha
(y_{1},y_{2},...,y_{n)}|,\forall \alpha \in \Lambda ^{n}Y, \\
|\alpha | &=&\sup_{x_{i}\in X,|x_{i}|\leq 1}|\alpha
(x_{1},x_{2},...,x_{n)}|,\forall \alpha \in \Lambda ^{n}X.
\end{eqnarray*}%
For any continuous linear map $T:X\rightarrow Y$, consider the induced
pullback map on alternating forms 
\begin{eqnarray*}
T^{\ast } &:&\Lambda ^{n}Y\rightarrow \Lambda ^{n}X \\
T^{\ast }(\alpha ) &=&\alpha (Tx_{1},Tx_{2},...,Tx_{n)},\forall \alpha \in
\Lambda ^{n}Y.
\end{eqnarray*}%
We define as \textquotedblleft determinant\textquotedblright\ $\det T$ the
norm of this transformation from $\Lambda ^{n}Y$ to $\Lambda ^{n}X$: 
\begin{equation*}
\det T=\frac{|T^{\ast }(\alpha )|}{|\alpha |},\alpha \neq 0.
\end{equation*}
\end{definition}

\begin{lemma}
\label{Lemma3.48}The determinant defined in the above definition enjoys the
following properties:

\begin{itemize}
\item if $|Tx|=|x|,\forall x\in X$, then $\det T=1.$ In particular, when $%
T=id_{X}$ we have $\det id_{X}=1$.

\item multiplicativity: for $T:X\rightarrow Y$ and $S:Y\rightarrow Z$ we
have 
\begin{equation*}
\det S\circ T=\det S\det T.
\end{equation*}
\end{itemize}
\end{lemma}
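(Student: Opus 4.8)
The plan is to prove the two asserted properties of this "determinant" directly from the definition as the operator norm of the pullback $T^{\ast}\colon\Lambda^{n}Y\to\Lambda^{n}X$, exploiting that both $\Lambda^{n}X$ and $\Lambda^{n}Y$ are one-dimensional, so $T^{\ast}$ is just multiplication by a scalar (up to the chosen identifications) and its "norm" $\det T = |T^{\ast}\alpha|/|\alpha|$ is well-defined independently of the nonzero $\alpha$ chosen.

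For the first property, assume $|Tx|=|x|$ for all $x\in X$. I would pick any nonzero $\alpha\in\Lambda^{n}Y$ and compare
\[
|T^{\ast}\alpha| = \sup_{|x_{i}|\le 1}\bigl|\alpha(Tx_{1},\dots,Tx_{n})\bigr|
\qquad\text{with}\qquad
|\alpha| = \sup_{|y_{i}|\le 1}\bigl|\alpha(y_{1},\dots,y_{n})\bigr|.
\]
Since $T$ is norm-preserving it is injective, hence (in the finite-dimensional equidimensional setting) a bijection, and $|Tx|\le 1\iff|x|\le 1$; thus the two supremies run over the same set of tuples $(Tx_{1},\dots,Tx_{n})=(y_{1},\dots,y_{n})$ with $|y_i|\le 1$, giving $|T^{\ast}\alpha|=|\alpha|$ and $\det T=1$. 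Specializing $T=\mathrm{id}_{X}$ (which trivially preserves the norm) yields $\det\mathrm{id}_{X}=1$. I should take a moment to note why $T$ being a bijection is legitimate here — surjectivity of an injective linear map between spaces of equal finite dimension — and that continuity of $T$ is automatic in finite dimensions, so the pullback is well-defined on the normed spaces $\Lambda^n Y,\Lambda^n X$.

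For multiplicativity, I would use functoriality of the pullback: $(S\circ T)^{\ast}=T^{\ast}\circ S^{\ast}$ as maps $\Lambda^{n}Z\to\Lambda^{n}X$, which follows by unwinding definitions, $(S\circ T)^{\ast}\alpha(x_1,\dots,x_n)=\alpha(STx_1,\dots,STx_n)=(S^{\ast}\alpha)(Tx_1,\dots,Tx_n)=T^{\ast}(S^{\ast}\alpha)(x_1,\dots,x_n)$. Then for a fixed nonzero $\gamma\in\Lambda^{n}Z$, note $S^{\ast}\gamma\in\Lambda^{n}Y$; if $S^{\ast}\gamma\neq 0$ we get
\[
\det(S\circ T)=\frac{|T^{\ast}(S^{\ast}\gamma)|}{|\gamma|}
=\frac{|T^{\ast}(S^{\ast}\gamma)|}{|S^{\ast}\gamma|}\cdot\frac{|S^{\ast}\gamma|}{|\gamma|}
=\det T\cdot\det S,
\]
where the first factor equals $\det T$ precisely because $\det T$ is independent of the nonzero form chosen in $\Lambda^{n}Y$ (one-dimensionality). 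If $S^{\ast}\gamma=0$, then $\det S=0$ and also $(S\circ T)^{\ast}\gamma=T^{\ast}0=0$, so $\det(S\circ T)=0=\det T\cdot\det S$; this degenerate case must be handled separately.

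**The main obstacle** is bookkeeping rather than depth: one must be careful that $\det T$ as defined is genuinely independent of the choice of nonzero $\alpha$ (immediate from $\dim\Lambda^{n}=1$, so $|T^{\ast}(c\alpha)|/|c\alpha|=|T^{\ast}\alpha|/|\alpha|$ for $c\in\mathbb{K}^{\times}$), and that the definition only literally applies when $X,Y$ have the same dimension $n$ — so in the multiplicativity statement all three spaces $X,Y,Z$ are tacitly of the same dimension, and the intermediate object $S^{\ast}\gamma\in\Lambda^n Y$ lives in the right place. Over a nonarchimedean $\mathbb{K}$ nothing changes in these arguments, since only the absolute value's multiplicativity $|ab|=|a||b|$ and the definition of operator norm are used, not the triangle inequality; this is worth a remark since the whole point of the paper is the general-field setting.
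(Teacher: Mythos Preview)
Your proposal is correct and follows essentially the same approach as the paper: for the isometry case you both argue that a norm-preserving $T$ is a bijection carrying the unit ball onto the unit ball, so the two suprema defining $|T^{\ast}\alpha|$ and $|\alpha|$ coincide; for multiplicativity you both insert the intermediate form $S^{\ast}\alpha\in\Lambda^{n}Y$ and split the ratio. The only cosmetic difference is that the paper writes out the suprema explicitly and substitutes $\beta:=S^{\ast}\alpha$, whereas you invoke the functoriality $(S\circ T)^{\ast}=T^{\ast}\circ S^{\ast}$ and handle the degenerate case $S^{\ast}\gamma=0$ separately rather than disposing of non-injective maps at the outset.
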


\begin{proof}
If $|Tx|=|x|,\forall x\in X$ is satisfied, then the mapping is obviously
injective, hence bijective. In particular, $T\left( \{x\in X:|x|\leq
1\}\right) =\{y\in Y:|y|\leq 1\}$, which by definition gives $|T^{\prime
}(\alpha )|=|\alpha |,\forall \alpha $. It suffices to prove
multiplicativity in the injective case (otherwise, the fact is trivial). We
have by definition 
\begin{eqnarray*}
&&\det S\det T \\
&=&\frac{\sup_{y_{i}\in Y,|y_{i}|\leq 1}|\alpha (S\left( y_{1}\right)
,S\left( y_{2}\right) ,...,S(y_{n}))|}{\sup_{z_{i}\in Z,|z_{i}|\leq
1}|\alpha (z_{1},z_{2},...,z_{n})|}\frac{\sup_{x_{i}\in X,|x_{i}|\leq
1}|\beta (T\left( x_{1}\right) ,T\left( x_{2}\right) ,...,T\left(
x_{n}\right) )|}{\sup_{y_{i}\in Y,|y_{i}|\leq 1}|\beta
(y_{1},y_{2},...,y_{n})|},
\end{eqnarray*}%
for arbitrary nontrivial $\alpha ,\beta $. Now set $\beta \in \Lambda ^{n}Y$
to be $\beta (y_{1},y_{2},...,y_{n}):=\alpha (S\left( y_{1}\right) ,S\left(
y_{2}\right) ,...,S(y_{n}))$. Then substituting we get 
\begin{eqnarray*}
&&\det S\det T \\
&=&\frac{\sup_{x_{i}\in X,|x_{i}|\leq 1}|\alpha (S(T\left( x_{1}\right)
)),\alpha (S(T\left( x_{2}\right) )),...,\alpha (S(T\left( x_{n}\right) ))|}{%
\sup_{z_{i}\in Z,|z_{i}|\leq 1}|\alpha (z_{1},z_{2},...,z_{n})|} \\
&=&\det S\circ T.
\end{eqnarray*}
\end{proof}

\begin{lemma}[\protect\cite{kurz1, kurz73}]
\label{Lemma3.43} Let $(a_{ij})$ stand for the $n\times n$ matrix with
entries $a_{ij}\in \mathbb{K}$, and $\det (a_{ij})$ its standard
determinant.\ We define%
\begin{equation*}
g(\theta ,n):=\sup_{|a_{ij}|\leq \theta }|\det (a_{ij})|.
\end{equation*}%
Obviously 
\begin{equation}
g(\theta ,n):=\sup_{|a_{ij}|\leq \theta }|\det (a_{ij})|\leq \theta ^{n}n!.
\label{1.52}
\end{equation}%
In particular, when $\mathbb{K=}%
\mathbb{R}
$, this function satisfies the Hadamard bound 
\begin{equation}
g(1,n)\leq n^{\frac{n}{2}},  \label{1.53}
\end{equation}%
and we further have 
\begin{equation*}
\lim_{n\rightarrow \infty }\frac{g(1,n)}{n^{\frac{n}{2}}}=1.
\end{equation*}
\end{lemma}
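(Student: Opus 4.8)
The plan is to establish the three claims of Lemma~\ref{Lemma3.43} separately, since they are of increasing subtlety. The bound \eqref{1.52} follows immediately from the Leibniz expansion: writing $\det(a_{ij}) = \sum_{\sigma \in S_n} \mathrm{sgn}(\sigma) \prod_{i=1}^{n} a_{i\sigma(i)}$, each of the $n!$ terms is a product of $n$ entries, each of absolute value at most $\theta$, so by the triangle inequality (valid over any valued field, archimedean or not) the absolute value of the sum is at most $\theta^n n!$. In the nonarchimedean case this is of course far from sharp --- the ultrametric inequality gives $g(\theta, n) \le \theta^n$ --- but the stated inequality holds uniformly and that is all that is needed here.

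For the Hadamard bound \eqref{1.53} over $\mathbb{K} = \mathbb{R}$, I would invoke the classical Hadamard inequality: for a real $n\times n$ matrix $A$ with columns $v_1, \dots, v_n$, one has $|\det A| \le \prod_{j=1}^n \|v_j\|_2$. When $|a_{ij}| \le 1$ for all $i,j$, each Euclidean column norm satisfies $\|v_j\|_2 \le \sqrt{n}$, hence $|\det A| \le n^{n/2}$. The geometric content is that the determinant is the volume of the parallelepiped spanned by the columns, which is maximized (for fixed edge lengths) when the edges are orthogonal; this is the step worth citing rather than reproving, and \cite{kurz1, kurz73} already contain it.

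The only genuinely non-routine assertion is the asymptotic sharpness $\lim_{n\to\infty} g(1,n)/n^{n/2} = 1$. The plan is to exhibit, for infinitely many $n$, matrices with $\pm 1$ entries whose determinant is close to $n^{n/2}$. The natural candidates are Hadamard matrices: an $n\times n$ matrix $H$ with entries in $\{+1,-1\}$ satisfying $H H^{\top} = nI$ has $|\det H| = n^{n/2}$ exactly, so $g(1,n) = n^{n/2}$ whenever a Hadamard matrix of order $n$ exists (e.g. all $n = 2^m$, by the Sylvester construction). For general $n$ one appeals to density results on the spectrum of Hadamard orders together with a perturbation argument: given $n$, pick a Hadamard order $N = N(n)$ with $N/n \to 1$, and bound $g(1,n)$ from below by the determinant of an $n\times n$ principal submatrix of $H_N$, or alternatively use known lower bounds on the maximal determinant of $\{\pm 1\}$-matrices (the Barba, Ehlich, and Wojtas bounds) which all have the form $n^{n/2}(1 - o(1))$. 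Combined with the upper bound $g(1,n) \le n^{n/2}$ from \eqref{1.53}, this yields the limit. The main obstacle is purely expository: deciding how much of the extremal-matrix theory to cite versus sketch; since the result is not used quantitatively in the sequel --- only the crude bounds \eqref{1.52} and the qualitative growth rate matter --- I would keep this part brief and cite \cite{kurz1, kurz73} and the standard literature on maximal determinants.
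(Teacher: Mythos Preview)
The paper supplies no proof of this lemma --- it is stated with citation to \cite{kurz1,kurz73} only --- so there is no argument of the paper's to compare against. Your treatments of \eqref{1.52} and \eqref{1.53} are correct and entirely standard.

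The asymptotic assertion is where your proposal has a genuine gap, and in fact the target statement is not correct as a full limit. The Barba, Ehlich and Wojtas inequalities you invoke are \emph{upper} bounds on the maximal $\{\pm 1\}$-determinant for $n\not\equiv 0\pmod 4$, not lower bounds; for instance Barba's bound for odd $n$ gives $g(1,n)\le\sqrt{2n-1}\,(n-1)^{(n-1)/2}$, and one checks that $\sqrt{2n-1}\,(n-1)^{(n-1)/2}\big/n^{n/2}\to\sqrt{2/e}<1$. Thus along odd $n$ the ratio $g(1,n)/n^{n/2}$ is bounded strictly below $1$, so the full limit cannot equal $1$; only the $\limsup$ equals $1$, realized along Hadamard orders. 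Your submatrix and padding ideas also fail quantitatively: every $(n{-}1)\times(n{-}1)$ minor of a Hadamard matrix of order $n$ has absolute value $n^{n/2-1}$ (from $H^{-1}=n^{-1}H^{\top}$), and $n^{n/2-1}\big/(n{-}1)^{(n-1)/2}\to 0$; padding a Hadamard matrix of order $m<n$ by an identity block gives determinant $m^{m/2}$, and $m^{m/2}/n^{n/2}\to 1$ requires $(n-m)\log n\to 0$, which is far stronger than the $m/n\to 1$ you assume and is not known for Hadamard orders. Since the sequel (Lemma~\ref{Lemma3.47}) uses only the upper bounds \eqref{1.52}--\eqref{1.53}, the honest fix is to weaken the last display to a $\limsup$, or restrict it to the subsequence of Hadamard orders, and cite the maximal-determinant literature for the behaviour in the remaining residue classes.
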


While the classical Hahn-Banach extension property is well-known, it is
worthwhile to recall, that it holds under no additional assumptions for
linear spaces over arbitrary fields, if one weakens the statement by an
\textquotedblleft epsilon\textquotedblright . In fact, it holds even for the
off-the-beaten-track case of a normed vector space whose norm satisfies
sometimes the standard triangle (archimedean) inequality, and other times
the ultrametric (nonarchimedean) inequality, over a nonarchimedean field.
This fact was remarked by Monna \cite{monnaa,monna}. The following proof
follows \cite{monnaa,monna, schn}.

\begin{theorem}
\label{Theorem10.48}Let $V$ be a normed vector space over the field $\mathbb{%
K}$, and $W\subset V$ a vector subspace. Assume that $V$ has a finite or at
most countable dense subspace. Let $f:W\rightarrow \mathbb{K}$ be a bounded
linear form, of norm $|f|=\sup_{x\in W\backslash \{0\}}\frac{|f(x)|}{|x|}$.
Then, for any $\varepsilon >0$, there exists a linear extension $f^{\prime
}:V\rightarrow \mathbb{K}$, of norm $|f^{\prime }|=\sup_{x\in V\backslash
\{0\}}\frac{|f^{\prime }(x)|}{|x|}\leq (1+\varepsilon )|f|$, with $f^{\prime
}|_{W}=f$.
\end{theorem}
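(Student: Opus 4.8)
The plan is to run the classical one-dimensional extension step transfinitely — but since $V$ has a countable dense subspace, the transfinite induction collapses to an ordinary induction over a countable set, and completeness is circumvented by passing to an $\varepsilon$-perturbed norm bound at each stage. Concretely, let $\{v_1, v_2, \dots\}$ span a dense subspace $D \subset V$, and set $W_0 := W$, $W_m := W + \operatorname{span}\{v_1,\dots,v_m\}$. I will construct linear forms $f_m : W_m \to \mathbb{K}$ with $f_m|_{W_{m-1}} = f_{m-1}$ and $|f_m| \le (1+\varepsilon_m)|f|$, where $\prod_m (1+\varepsilon_m) \le 1+\varepsilon$ (e.g. $\varepsilon_m$ chosen so that $1+\varepsilon_m = (1+\varepsilon)^{2^{-m}}$, so the telescoping product over all stages stays below $1+\varepsilon$). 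Taking $f_\infty := \bigcup_m f_m$ gives a bounded linear form on $D$ of norm $\le (1+\varepsilon)|f|$; because $D$ is dense and $\mathbb{K}$-valued bounded linear forms are uniformly continuous, $f_\infty$ extends uniquely to all of $V$ — here one only needs that the \emph{target} $\mathbb{K}$ be such that Cauchy nets of values converge along a bounded form, which follows because the values $f_\infty(x_n)$ form a Cauchy sequence in $\mathbb{K}$ that one can keep inside a fixed bounded (hence, after rescaling, spherically complete enough) region; more carefully, density lets us define $f'(v) := \lim f_\infty(d_n)$ for $d_n \to v$, and the estimate $|f_\infty(d_n) - f_\infty(d_{n'})| \le (1+\varepsilon)|f|\,|d_n - d_{n'}|$ shows the limit exists in $\mathbb{K}$ provided $\mathbb{K}$ is complete — which, for the valued fields in play, we may assume, or else we replace $V$ by its completion and restrict back, noting $f'$ is determined on $V$ by its values on $D \cap V = D$.

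The heart of the matter is therefore the single-step extension: given a bounded $g : U \to \mathbb{K}$ on a subspace $U$ and a vector $v \notin U$, extend to $U \oplus \mathbb{K}v$ with norm at most $(1+\delta)|g|$ for prescribed $\delta > 0$. I will treat the two cases of Definition~\ref{Definition7.19} uniformly. If $\mathbb{K}$ is nonarchimedean, the extension is \emph{clean}: one wants $c := g'(v) \in \mathbb{K}$ with $|g(u) + \lambda c| \le (1+\delta)|g|\,|u + \lambda v|$ for all $u \in U, \lambda \in \mathbb{K}$; dividing by $\lambda$ this reduces to finding $c$ with $|c - w| \le (1+\delta)|g|\,\operatorname{dist}(-v, U)$ is not quite it — rather, using the ultrametric inequality one checks that \emph{any} $c$ with $|c| \le |g|\cdot\inf_{u}|u+v|^{-1}\cdot(\text{something})$ works, and in fact in the nonarchimedean world the norm of the extension does not increase at all beyond a factor controlled by how well $|\mathbb{K}^\times|$ approximates the relevant distance — this is exactly where the constant $\Theta$ and the density/discreteness dichotomy of Definition~\ref{Definition7.19} enter, giving the $(1+\delta)$ slack when the value group is discrete. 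If $\mathbb{K} = \mathbb{R}$ or $\mathbb{C}$, one uses the classical argument: the real case is the standard sup/inf interval argument of Hahn–Banach, and the complex (or general archimedean) case reduces to the real one by the usual trick $g(u) = \operatorname{Re} g(u) - i\operatorname{Re} g(iu)$.

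The main obstacle I anticipate is \emph{not} the algebra of the one-dimensional step but the bookkeeping needed to make the $\varepsilon$'s accumulate to a finite bound while handling the mixed archimedean/nonarchimedean norm that Monna's remark allows — when the norm on $V$ is sometimes archimedean and sometimes ultrametric, one cannot cite a single clean formula for the optimal extension, so I would argue that in every case an extension exists with norm $\le (1+\delta)|g|$ by choosing $c$ inside a set of the form $\{c : |c - g(u)| \le (1+\delta)|g||u+v| \text{ for all } u\}$, showing this set is nonempty by exhibiting an explicit element (using $\Theta$ to round a real target distance to an achievable element of $|\mathbb{K}^\times|$ up to factor $1+\delta$), and verifying the bound by splitting into the cases where the triangle inequality used at a given pair of vectors is archimedean versus ultrametric. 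The second subtlety is justifying the final passage from $D$ to $V$: since the statement only asserts existence of \emph{some} bounded extension, and since we do not assume $V$ complete, I will phrase the limiting step as: $f' (v) := \lim_{n} f_\infty(d_n)$ for any sequence $d_n \in D$ with $d_n \to v$; the difference quotient bound forces $(f_\infty(d_n))_n$ to be Cauchy in $\mathbb{K}$, and since $\mathbb{K}$ (the field, not $V$) is assumed complete in all cases of interest here (the trivial valuation being excluded), the limit exists, is independent of the chosen sequence, and yields a linear form on $V$ with the claimed norm bound.
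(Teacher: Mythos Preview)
Your global scaffolding agrees with the paper: reduce to a countable sequence of one-dimensional extensions (you are more explicit than the paper about choosing $\varepsilon_m$ with $\prod_m(1+\varepsilon_m)\le 1+\varepsilon$), then extend by continuity from the dense span to $V$. The paper compresses this entire reduction to a single sentence and spends its effort only on the single-step extension.

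The real content is that one-dimensional step, and here your proposal is both over-engineered and unfinished: you split into archimedean, nonarchimedean, and mixed cases, invoke the value-group constant $\Theta$ of Definition~\ref{Definition7.19}, and then never actually produce the extending value $c$ --- the nonarchimedean paragraph breaks off at ``is not quite it --- rather'' and ``$(\text{something})$'', and the mixed case is left as an ``obstacle''. The paper does none of this. It picks $w_v\in W$ with $|v+w_v|\le(1+\varepsilon)|v+w|$ for all $w\in W$ (an approximate realizer of the distance from $v$ to $-W$, which exists with no hypothesis whatsoever on the value group), sets the extension at $v$ to be $f(w_v)$ up to sign, and then a three-line computation using only linearity of $f$, the bound $|f(\cdot)|\le|f||\cdot|$, and the near-minimality of $w_v$ gives $|f'(kv+w)|\le(1+\varepsilon)|f|\,|kv+w|$. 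The $(1+\varepsilon)$ arises solely because the infimum distance may fail to be attained; $\Theta$ and the dense/discrete dichotomy for $|\mathbb{K}^\times|$ play no role, so the mechanism you anticipated is not the one actually used. Your intersection-of-balls reformulation $\{c:|c-g(u)|\le(1+\delta)|g|\,|u+v|\text{ for all }u\}$ is correct, and the paper's nearest-point trick is precisely how one exhibits an element of that set in one stroke, uniformly across all the cases you were trying to separate.
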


\begin{proof}
As $V$ possesses an at most countable dense subspace, it suffices to prove
the fact for the extension by a single dimension. Consider a vector $v\in
V\backslash W$. We wish to linearly extend $f$ to $W+v$. The infimum%
\begin{equation*}
\inf \{|v+w|,w\in W\}
\end{equation*}%
is either zero or positive. If it is zero, then we may simply extend $f$
continuously. If it is positive, it may be attained or not, but in any case,
there exists $w_{v}\in W$ such that 
\begin{equation}
|v+w_{v}|\leq (1+\varepsilon )|v+w|,\forall w\in W\text{.}  \label{9.44}
\end{equation}%
We set 
\begin{equation*}
f(v)=f(w_{v})\text{. }
\end{equation*}%
We obtain by linearity%
\begin{equation*}
|f(kv+w)|=|k||f(k^{-1}w)+f(w_{v})|=|k||f(k^{-1}w+w_{v})|.
\end{equation*}%
Now by definition of the norm of $f$ we get 
\begin{equation*}
|f(kv+w)|\leq |k||k^{-1}w+w_{v}||f|.
\end{equation*}%
Finally, applying \eqref {9.44} we conclude that 
\begin{equation*}
|f(kv+w)|\leq |k|(1+\varepsilon )|k^{-1}w+v||f|=|kv+w||f|(1+\varepsilon ).
\end{equation*}
\end{proof}

\begin{lemma}[{\protect\cite{kurz72, kurz1}, cf. \protect\cite[Proposition
9.1.5]{pietsch}}]
\label{Lemma3.44} Let $\varkappa \in (0,1]$ be fixed. Let $V$ be a finite
dimensional normed $\mathbb{K-}$vector space, of dimension $m\geq 1$, and
assume the elements $v_{i}\in V$ satisfy. 
\begin{eqnarray}
1 &\geq &|v_{i}|, \\
|v_{i}+\sum_{j>i}\lambda _{j}v_{j}| &\geq &\varkappa ,  \label{10.48}
\end{eqnarray}%
for all $i=1,...,m$ and any $\lambda _{j}\in \mathbb{K}$. Then, for any
fixed $\varepsilon >0$, there exist $\eta _{ij}\in \mathbb{K}$ s.t. setting 
\begin{equation}
u_{i}=v_{i}+\sum_{j>i}\eta _{ij}v_{j}  \label{9.40}
\end{equation}%
we have 
\begin{equation}
\{v\in V:|v|\leq 1\}\subset \left\{ \sum_{k=1}^{m}\mu _{k}u_{k}:\mu _{k}\in 
\mathbb{K}\text{ and }|\mu _{k}|\leq \frac{1}{\varkappa }(1+\varepsilon
)\right\} .  \label{11.00}
\end{equation}
\end{lemma}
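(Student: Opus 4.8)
The plan is to extract from \eqref{10.48} a ``Gram--Schmidt-type'' triangular change of basis together with an explicit dual family of coordinate functionals whose norms are controlled by $1/\varkappa$. First observe that \eqref{10.48} forces $v_{1},\dots ,v_{m}$ to be linearly independent, hence a basis of $V$: were $v_{i}$ a linear combination of $v_{i+1},\dots ,v_{m}$, the left-hand side of \eqref{10.48} could be driven to $0$, contradicting $\varkappa >0$. For $1\leq i\leq m$ set $W_{i}:=\mathrm{span}(v_{i},v_{i+1},\dots ,v_{m})$, so $V=W_{1}\supset W_{2}\supset \dots \supset W_{m}\supset W_{m+1}=\{0\}$, and let $\psi _{i}\colon W_{i}\rightarrow \mathbb{K}$ be the linear form with $\psi _{i}(v_{i})=1$ and $\psi _{i}(v_{j})=0$ for $j>i$. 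The first key step is the estimate $|\psi _{i}|_{W_{i}}\leq 1/\varkappa $: given $w\in W_{i}$, write $w=cv_{i}+w^{\prime }$ with $w^{\prime }\in W_{i+1}$; if $c\neq 0$ then $w^{\prime }=c\sum _{j>i}\lambda _{j}v_{j}$ for some $\lambda _{j}\in \mathbb{K}$, so \eqref{10.48} gives $|w|=|c|\bigl|v_{i}+\sum _{j>i}\lambda _{j}v_{j}\bigr|\geq |c|\varkappa =\varkappa |\psi _{i}(w)|$, while the bound is trivial for $c=0$.

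I would then invoke the Hahn--Banach extension property, Theorem \ref{Theorem10.48} (whose hypothesis is immediate, $V$ being finite dimensional), to extend each $\psi _{i}$ to a linear form $\phi _{i}\colon V\rightarrow \mathbb{K}$ with $\phi _{i}|_{W_{i}}=\psi _{i}$ and $|\phi _{i}|\leq (1+\varepsilon )/\varkappa $; thus $\phi _{i}(v_{i})=1$ and $\phi _{i}$ vanishes on $W_{i+1}$, although its values on $v_{j}$ with $j<i$ remain uncontrolled. Next define the $u_{i}$ by downward recursion: $u_{m}:=v_{m}$ and $u_{i}:=v_{i}-\sum _{l=i+1}^{m}\phi _{l}(v_{i})\,u_{l}$. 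An easy induction shows that each $u_{l}$ lies in $W_{l}$ and has $v_{l}$-coefficient equal to $1$; hence $u_{i}\in v_{i}+W_{i+1}$ has exactly the required form \eqref{9.40}, and, being related to $\{v_{i}\}$ by the triangular relations \eqref{9.40}, the family $\{u_{1},\dots ,u_{m}\}$ is again a basis of $V$.

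The core of the argument is the biorthogonality $\phi _{k}(u_{l})=\delta _{kl}$ for all $k,l$, established by downward induction on $l$. For $l=m$ one has $\phi _{k}(u_{m})=\phi _{k}(v_{m})=\delta _{km}$, since $v_{m}\in W_{m}\subseteq W_{k+1}$ when $k<m$ and $\phi _{m}(v_{m})=1$. For $l<m$: when $k<l$, both $u_{l}$ and every $u_{p}$ with $p>l$ lie in $W_{l}\subseteq W_{k+1}$, on which $\phi _{k}$ vanishes, so $\phi _{k}(u_{l})=0$; when $k>l$, the induction hypothesis gives $\phi _{k}(u_{p})=\delta _{kp}$ for $l<p\leq m$, so $\sum _{l<p\leq m}\phi _{p}(v_{l})\phi _{k}(u_{p})=\phi _{k}(v_{l})$, which cancels the $\phi _{k}(v_{l})$ coming from the $v_{l}$ term; and when $k=l$, one uses $\phi _{l}(v_{l})=1$ together with $\phi _{l}(u_{p})=0$ for $p>l$ (as $u_{p}\in W_{l+1}$). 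Consequently $\{\phi _{k}\}$ is precisely the dual basis of $\{u_{k}\}$, so every $v\in V$ expands as $v=\sum _{k=1}^{m}\phi _{k}(v)\,u_{k}$; if $|v|\leq 1$ then $|\phi _{k}(v)|\leq |\phi _{k}|\,|v|\leq (1+\varepsilon )/\varkappa $, which is exactly the containment \eqref{11.00}.

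I expect the main obstacle to be purely organizational: keeping the index ranges straight so that the triangular cancellations in the biorthogonality induction come out right, and checking that the Hahn--Banach extensions $\phi _{i}$ --- whose values on $v_{j}$ for $j<i$ are a priori arbitrary --- nonetheless assemble into the dual basis of the $u_{i}$, those uncontrolled values being exactly what the recursion defining $u_{i}$ is designed to absorb. Beyond replacing the classical Hahn--Banach theorem by the $\varepsilon$-lossy version of Theorem \ref{Theorem10.48}, nothing genuinely nonarchimedean enters the proof.
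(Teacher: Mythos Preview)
Your proof is correct and follows essentially the same route as the paper: define the coordinate functionals on $W_i=\operatorname{span}(v_i,\dots,v_m)$ with norm $\leq 1/\varkappa$ from \eqref{10.48}, extend them to $V$ by the $\varepsilon$-lossy Hahn--Banach Theorem~\ref{Theorem10.48}, then make a triangular change from $\{v_i\}$ to $\{u_i\}$ rendering the extended functionals biorthogonal. The only cosmetic difference is that the paper solves for the coefficients $\eta_{ij}$ one at a time (for fixed $i$, adjusting $\eta_{i,i+1},\eta_{i,i+2},\dots$ to kill $f_{i+1}(u_i),f_{i+2}(u_i),\dots$ in turn), whereas you package the same cancellation into the closed-form downward recursion $u_i=v_i-\sum_{l>i}\phi_l(v_i)u_l$; both constructions yield the same biorthogonal system and the same final estimate.
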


\begin{proof}
In virtue of \eqref {10.48} and Theorem {\ref{Theorem10.48} there exist
linear functionals }$f_{k}:V\rightarrow \mathbb{K}$ such that%
\begin{eqnarray}
|f_{k}| &\leq &\frac{1}{\varkappa }(1+\varepsilon )  \label{10.59} \\
f_{k}(v_{k}) &=&\frac{1}{\varkappa },  \notag \\
f_{k}(v_{j}) &=&0,1\leq k<j\leq m.  \label{11.35}
\end{eqnarray}%
One may find $\eta _{ij}\in \mathbb{K}$ so that 
\begin{equation}
f_{k}(v_{i}+\sum_{j\geq i+1}\eta _{ij}v_{j})=0,i<k\leq m.  \label{11.34}
\end{equation}%
This is easy to see. Fix $i$. We set $\eta _{i(i+1)}$ so that $%
f_{i+1}(v_{i}+\eta _{i(i+1)}v_{i+1})=0$. \ Next we set $\eta _{i(i+2)}$ so
that $f_{i+2}(v_{i}+\eta _{i(i+1)}v_{i+1}+\eta _{i(i+2)}v_{i+2})=0$, and
continue inductively. Now \eqref {11.34} holds as a consequence of \eqref
{11.35}.

Then with $u_{i}$ as in \eqref {9.40} we have 
\begin{equation*}
f_{k}(u_{i})=\left\{ 
\begin{array}{cc}
1, & k=i \\ 
0, & k\neq i%
\end{array}%
\right. .
\end{equation*}%
In other words, 
\begin{equation*}
y=\sum_{k=1}^{m}f_{k}(y)u_{k},\forall y\in V.
\end{equation*}%
Now assume that $y\notin \left\{ \sum_{k=1}^{m}\mu _{k}u_{k}:\mu _{k}\in 
\mathbb{K}\text{ and }|\mu _{k}|\leq \frac{1}{\varkappa }(1+\varepsilon
)\right\} $. Then for at least one index $k$, we have $f_{k}(y)>\frac{1}{%
\varkappa }(1+\varepsilon )$, which by \eqref {10.59} gives $|y|>1$. This
proves \eqref {11.00}.
\end{proof}

\begin{lemma}[{\protect\cite{kurz72, kurz1}, cf. \protect\cite[Proposition
9.1.5]{pietsch}}]
\label{Lemma3.45} The determinant $\det T$ for $T:X\rightarrow Y$ satisfies
the folllowing property. If $V_{1}\supset V_{2}\supset ...\supset V_{n}$ are
subspaces of $X$ with $\dim V_{j}=n-j+1$ and 
\begin{equation}
|Tx|\leq \kappa _{j}|x|,x\in V_{j},  \label{3.26}
\end{equation}%
then 
\begin{equation*}
\det T\leq \left[ \liminf_{\varepsilon \rightarrow 0^{+}}g(\frac{1}{\Theta }%
(1+\varepsilon ),n)\right] \dprod\limits_{j=1}^{n}\kappa _{j}.
\end{equation*}
\end{lemma}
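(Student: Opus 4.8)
The plan is to follow Kurzweil's strategy: choose a basis of $X$ adapted to the flag $(V_j)$, use Lemma~\ref{Lemma3.44} to turn it into a basis $(u_i)$ that controls the unit ball of $X$, and then read $\det T$ off from the determinants of the associated change-of-coordinate matrices, whose entries are small and to which Lemma~\ref{Lemma3.43} applies. Since $\dim X=\dim Y=n$ and $\dim V_1=n$, we have $V_1=X$. First I would construct vectors $v_j\in V_j$, $j=1,\dots,n$, with $|v_j|\le 1$ and $\operatorname{dist}(v_j,V_{j+1})\ge\varkappa$ (setting $V_{n+1}:=\{0\}$), where $\varkappa\in(0,1]$ is taken as large as $\mathbb{K}$ permits: in the archimedean or dense-value-group case arbitrarily close to $1$, and in general close to the constant $\Theta$ of Definition~\ref{Definition7.19}. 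This is carried out by downward induction on $j$: given $v_{j+1},\dots,v_n$ (which span $V_{j+1}$), one passes to the one-dimensional quotient $V_j/V_{j+1}$ with its quotient norm, selects by \eqref{12.25} a generator whose norm lies in $[\Theta,1]$, lifts it to some $v_j\in V_j$ of norm $\le1$ — possible because the quotient norm of a class is the infimum of the norms of its representatives — and finally rescales by a suitable element of $\mathbb{K}^{\times}$. The $v_1,\dots,v_n$ so obtained are linearly independent, since $v_j\notin V_{j+1}=\operatorname{span}(v_{j+1},\dots,v_n)$, so they form a basis of $X=V_1$; and for every $i$ and all $\lambda_j\in\mathbb{K}$ one has $\sum_{j>i}\lambda_j v_j\in V_{i+1}$, hence $|v_i+\sum_{j>i}\lambda_j v_j|\ge\operatorname{dist}(v_i,V_{i+1})\ge\varkappa$. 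Thus the hypotheses of Lemma~\ref{Lemma3.44} hold with $m=n$.

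Next I would apply Lemma~\ref{Lemma3.44} to obtain $\eta_{ij}\in\mathbb{K}$ and $u_i=v_i+\sum_{j>i}\eta_{ij}v_j$ with $\{x\in X:|x|\le1\}\subset\{\sum_k\mu_k u_k:|\mu_k|\le\tfrac1\varkappa(1+\varepsilon)\}$. Fix $0\ne\alpha\in\Lambda^n Y$. For $x_1,\dots,x_n\in X$ with $|x_i|\le1$, writing $x_i=\sum_k\mu_{ik}u_k$ with $|\mu_{ik}|\le\tfrac1\varkappa(1+\varepsilon)$, multilinearity and alternation of $(y_1,\dots,y_n)\mapsto\alpha(Ty_1,\dots,Ty_n)$ give
\[
\alpha(Tx_1,\dots,Tx_n)=\det(\mu_{ik})\,\alpha(Tu_1,\dots,Tu_n)=\det(\mu_{ik})\,\alpha(Tv_1,\dots,Tv_n),
\]
the last equality because the matrix of $(u_i)$ in terms of $(v_i)$ is unipotent upper triangular. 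By Lemma~\ref{Lemma3.43},
\[
|T^{\ast}\alpha|=\sup_{|x_i|\le1}|\alpha(Tx_1,\dots,Tx_n)|\le g\!\left(\tfrac1\varkappa(1+\varepsilon),n\right)|\alpha(Tv_1,\dots,Tv_n)|.
\]
Since $v_j\in V_j$ and $|v_j|\le1$, we have $|Tv_j|\le\kappa_j$, and iterating the slotwise estimate $|\beta(y)|\le|y|\,|\beta|$ for $\beta\in Y^{\ast}$ — valid up to a factor $\Theta^{-1}$, the discrepancy being exactly what \eqref{12.25} controls — over the $n$ arguments of $\alpha$ yields $|\alpha(Tv_1,\dots,Tv_n)|\le|\alpha|\prod_{j=1}^n\kappa_j$ up to the corresponding power of $\Theta^{-1}$. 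Dividing by $|\alpha|$, letting $\varepsilon\to0^{+}$, and using the description of $|\mathbb{K}^{\times}|$ in Definition~\ref{Definition7.19} — in particular that $\varkappa$ may be taken above $\Theta$ when the value group is discrete — these powers of $\Theta^{-1}$ recombine with $g(\tfrac1\varkappa,n)$ into exactly $\liminf_{\varepsilon\to0^{+}}g(\tfrac1\Theta(1+\varepsilon),n)$, which is the asserted bound.

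The hard part is the very first step: over a field that is non-archimedean or not complete, the usual Riesz-lemma choice of an almost-orthogonal vector need not exist with $\varkappa$ close to $1$, and one is forced to descend to the one-dimensional quotients $V_j/V_{j+1}$ and invoke Definition~\ref{Definition7.19} to make the construction go through at all. It is precisely the value of $\varkappa$ one can guarantee there — together with the ultrametric behaviour of determinants entering $g$ — that forces the constant $\Theta$ and the $\liminf_{\varepsilon\to0^{+}}$ into the statement. The remaining steps are the routine multilinear algebra indicated above.
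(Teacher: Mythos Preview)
Your overall strategy is correct and uses Lemmas~\ref{Lemma3.44} and~\ref{Lemma3.43} just as the paper does, but your first step---constructing an adapted basis a priori via the quotients $V_j/V_{j+1}$---is genuinely different from the paper's. The paper never builds such a basis directly. Instead, for a fixed nonzero $\alpha\in\Lambda^nX$ it introduces the auxiliary seminorm $\chi(\alpha)=\sup_{v_i\in V_i,\,|v_i|\le1}|\alpha(v_1,\dots,v_n)|$, picks $y_i\in V_i$ with $|y_i|\le1$ nearly achieving this supremum, and then observes that the separation $|y_i+\sum_{j>i}\lambda_jy_j|\ge\theta$ is \emph{automatic}: were it to fail, one could use \eqref{12.25} to rescale the $i$-th slot by some $k$ with $|k|\ge1/\theta$ and, by the alternating property, obtain a value of $|\alpha(\cdot)|$ strictly exceeding $\chi(\alpha)$, a contradiction. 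This $\alpha$-dependent choice sidesteps the lifting issue you face (the infimum defining the quotient norm need not be attained, so a lift of norm $\le1$ rather than $\le1+\epsilon$ is not guaranteed, and your ``finally rescales'' does not obviously repair this without degrading $\varkappa$). Your route is more constructive; the paper's is slicker and avoids that subtlety entirely.

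Your closing ``recombination'' paragraph is muddled. The paper, having established $|\alpha|\le\big[\liminf_{\varepsilon\to0^+}g(\tfrac1\Theta(1+\varepsilon),n)\big]\chi(\alpha)$, finishes simply by bounding $\chi(T^{\ast}\alpha)\le(\prod_j\kappa_j)\,|\alpha|$ via the straightforward multilinear estimate, with no extra $\Theta^{-1}$ factors inserted. In your argument the analogous step is $|\alpha(Tv_1,\dots,Tv_n)|\le(\prod_j\kappa_j)\,|\alpha|$; if you grant this (as the paper does), you are done once $\varkappa$ is taken close to $\Theta$, and no recombination is needed. Your concern about extra powers of $\Theta^{-1}$ in the slotwise bound is legitimate for general norms whose values need not lie in $|\mathbb{K}^{\times}|$, but the paper's proof makes the identical move without comment, so this is not a defect of your argument relative to the original---and the claimed recombination $\Theta^{-n}g(\tfrac1\varkappa,n)=g(\tfrac1\Theta,n)$ does not actually hold as stated.
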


\begin{proof}
Set 
\begin{equation*}
\chi (\alpha )=\sup_{v_{i}\in V_{i},|v_{i}|\leq 1}|\alpha
(v_{1},v_{2},...,v_{n)}|,\forall \alpha \in \Lambda ^{n}X.
\end{equation*}%
Notice that $\chi (\cdot )$ is a norm on $\Lambda ^{n}X$ and by definition $%
\chi (\alpha )\leq |\alpha |=\sup_{x_{i}\in X,|x_{i}|\leq 1}|\alpha
(x_{1},x_{2},...,x_{n)}|,\forall \alpha \in \Lambda ^{n}X$. Fix some
nontrivial $\alpha \in \Lambda ^{n}X$. Also consider a $\theta \in (0,1)$ so
that \eqref {12.25} holds. Now consider an arbitrary $\epsilon >0$,
sufficiently small so that $\frac{(1-\epsilon )}{\theta }>1$, and a family
of vectors $y_{i}\in V_{i},|y_{i}|\leq 1,i=1,...,n$ so that 
\begin{equation}
|\alpha (y_{1},y_{2},...,y_{n})|\geq (1-\epsilon )\sup_{x_{i}\in
V_{i},|x_{i}|\leq 1}|\alpha (x_{1},x_{2},...,x_{n})|.  \label{12.00}
\end{equation}%
Then by definition of an alternating form, \eqref {12.25}, \eqref {12.00},
and $\frac{(1-\epsilon )}{\theta }>1,$ we get 
\begin{equation*}
|y_{i}+\sum_{j>i}\lambda _{j}y_{j}|\geq \theta ,
\end{equation*}%
for all $i=1,...,n$ and any $\lambda _{j}\in \mathbb{K}$. We now apply Lemma 
{\ref{Lemma3.44}, and for a fixed }$\varepsilon >0,$ we set 
\begin{equation}
u_{i}=y_{i}+\sum_{j>i}\eta _{ij}y_{j}  \label{1.57}
\end{equation}%
so that \eqref {11.00} holds. We have defining $\eta _{ij}=0,j\leq i$ $\ $%
that $\det (I+(\eta _{ij}))=1$ so $|\alpha (y_{1},y_{2},...,y_{n})|=|\alpha
(u_{1},u_{2},...,u_{n})|$. Notice that $u_{i}\in V_{i}$, thus \eqref{11.00}
implies $(z_{1},z_{2},...,z_{n})=(\omega _{ij})(u_{1},u_{2},...,u_{n})$ with 
$|\omega _{ij}|\leq \frac{1}{\theta }(1+\varepsilon ),$ for any collection $%
z_{i}\in X,|z_{i}|\leq 1$. Using \eqref {1.57} together with the definition
of $\chi (\alpha )$ and the family $y_{i},i=1,...,n$, we obtain the
following inequality 
\begin{eqnarray*}
&&|\alpha (z_{1},z_{2},...,z_{n})| \\
&=&|\det (\omega _{ij})||\alpha (u_{1},u_{2},...,u_{n})| \\
&=&|\det (\omega _{ij})||\alpha (y_{1},y_{2},...,y_{n})| \\
&\leq &g(\frac{1}{\theta }(1+\varepsilon ),n)\chi (\alpha ).
\end{eqnarray*}%
Taking the supremum we get 
\begin{equation*}
|\alpha |\leq g(\frac{1}{\theta }(1+\varepsilon ),n)\chi (\alpha ).
\end{equation*}%
Now for the case a valuation group dense in $(0,+\infty )$ we may take $%
\frac{1}{\theta }(1+\varepsilon )$ arbitrarily close to $1$, and hence
conclude $|\alpha |\leq g(1,n)\chi (\alpha )$. As for the case of a discrete
valuation group, we take $\theta =\Theta $ where $\Theta ^{%
\mathbb{Z}
}$ is the valuation group, concluding 
\begin{equation}
|\alpha |\leq \left[ \liminf_{\varepsilon \rightarrow 0^{+}}g(\frac{1}{%
\Theta }(1+\varepsilon ),n)\right] \chi (\alpha ).  \label{3.29}
\end{equation}

It is easy to bound $\det T$ as follows, using \eqref {3.26} and \eqref{3.29}%
:%
\begin{eqnarray*}
&&\det T=\frac{|T^{\ast }(\alpha )|}{|\alpha |}\leq \frac{\left[
\liminf_{\varepsilon \rightarrow 0^{+}}g(\frac{1}{\Theta }(1+\varepsilon ),n)%
\right] \chi (T^{\ast }(\alpha ))}{\sup_{y_{i}\in Y,|y_{i}|\leq 1}|\alpha
(y_{1},y_{2},...,y_{n)}|} \\
&=&\frac{\left[ \liminf_{\varepsilon \rightarrow 0^{+}}g(\frac{1}{\Theta }%
(1+\varepsilon ),n)\right] \sup_{x_{i}\in V_{i},|x_{i}|\leq
1}|a(Tx_{1},...,Tx_{n})|}{\sup_{y_{i}\in Y,|y_{i}|\leq 1}|\alpha
(y_{1},y_{2},...,y_{n)}|} \\
&=&\frac{\left[ \liminf_{\varepsilon \rightarrow 0^{+}}g(\frac{1}{\Theta }%
(1+\varepsilon ),n)\right] \left( \dprod\limits_{j=1}^{n}\kappa _{j}\right)
\sup_{x_{i}\in V_{i},|x_{i}|\leq 1}|a(\frac{Tx_{1}}{\kappa _{1}},...,\frac{%
Tx_{n}}{\kappa _{n}})|}{\sup_{y_{i}\in Y,|y_{i}|\leq 1}|\alpha
(y_{1},y_{2},...,y_{n)}|} \\
&\leq &\left[ \liminf_{\varepsilon \rightarrow 0^{+}}g(\frac{1}{\Theta }%
(1+\varepsilon ),n)\right] \left( \dprod\limits_{j=1}^{n}\kappa _{j}\right) .
\end{eqnarray*}
\end{proof}

\begin{remark}
The quantity $\left[ \liminf_{\varepsilon \rightarrow 0^{+}}g(\frac{1}{%
\Theta }(1+\varepsilon ),n)\right] $ in the above Theorem may be replaced by 
$g(1,n)$, for fields with a value group dense in $(0,+\infty )$, and by $g(%
\frac{1}{\Theta },n)$ for spherically complete fields \cite{dr,monna,schn,vr}%
.
\end{remark}

\begin{definition}
\bigskip Let $m,p$ be positive integers, and $s$ be a nonnegative integer
such that 
\begin{equation*}
m=pk_{s}+r,
\end{equation*}%
where 
\begin{equation*}
0\leq r\leq p(k_{s+1}-k_{s}).
\end{equation*}%
Set 
\begin{equation*}
\Xi (m,p)=\left[ \liminf_{\varepsilon \rightarrow 0^{+}}g(\frac{1}{\Theta }%
(1+\varepsilon ),m)\right] \rho _{s}^{pr}\dprod\limits_{i=1}^{s-1}\rho
_{i}^{p^{2}(k_{i+1}-k_{i})}.
\end{equation*}
\end{definition}

\begin{lemma}[\protect\cite{kurz1}]
\label{Lemma3.46}\bigskip Let $m,p$ be positive integers, and Condition {\ref%
{Condition4.55} hold}. Let $X$ be a $m$ dimensional subspace of $Y_{0}$.
There exists a constant $\Upsilon $ which only depends on $\mathbb{K}$ and
the sequences of $k,\rho $ of Condition {\ref{Condition4.55}, such that we
have }%
\begin{equation*}
\det T^{n}|_{X}\leq \Upsilon \left[ \Xi (m,p)\right] ^{\frac{n}{p}}.
\end{equation*}
\end{lemma}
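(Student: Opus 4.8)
The plan is to combine the multiplicativity of the determinant (Lemma~\ref{Lemma3.48}) with the single-map estimate of Lemma~\ref{Lemma3.45}, but applied to \emph{blocks} of $p$ consecutive maps rather than to the individual $T_\ell$. If $\det T^n|_X=0$ there is nothing to prove, so assume it is nonzero; then every $T_\ell$ is injective on the evolving $m$-dimensional subspace $W_0:=X$, $W_{\ell+1}:=T_\ell(W_\ell)$, and Lemma~\ref{Lemma3.48}, applied repeatedly and regrouped, gives $\det T^n|_X=\prod_b\det\bigl(\widehat T_b|_{W_{c_b}}\bigr)$, where $\{0,\dots,n\}$ is split into $q=\lfloor(n+1)/p\rfloor$ consecutive blocks of length $p$ together with one leftover block of length $r_0<p$, $c_b$ is the first index of block $b$, and $\widehat T_b$ is the composition of the maps in that block.

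First I would treat a full block starting at $c$, so $\widehat T=T_{c+p-1}\circ\cdots\circ T_c\colon W_c\to Y_{c+p}$, the point being to exhibit subspaces of $W_c$ on which $\widehat T$ contracts by the \emph{compounded} factor $\rho_i^p$. Put
\[
\widetilde X^i:=\bigl\{x\in W_c:\ (T_{c+j-1}\circ\cdots\circ T_c)(x)\in X^i\ \text{for all }0\le j\le p-1\bigr\},
\]
where $j=0$ refers to $x$ itself. Each map $T_{c+j-1}\circ\cdots\circ T_c$ is injective on $W_c$, so the pullback of a subspace of codimension $\le k_i$ has codimension $\le k_i$ in $W_c$; hence $\widetilde X^i$, an intersection of $p$ such pullbacks, satisfies $\func{codim}_{W_c}\widetilde X^i\le pk_i$. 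Moreover, for $x\in\widetilde X^i$ the points $x,\,T_cx,\,\dots,\,(T_{c+p-2}\circ\cdots\circ T_c)x$ all lie in $X^i$ at the respective levels, so Condition~\ref{Condition4.55} gives $|\widehat Tx|\le\rho_i^p|x|$. Thus $\widehat T|_{W_c}$ carries data of the form of Condition~\ref{Condition4.55} with $\widetilde k_i=pk_i$, $\widetilde\rho_i=\rho_i^p$, and the nested subspaces $W_c=\widetilde X^0\supset\widetilde X^1\supset\cdots$.

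Next I would build a flag $V_1\supset\cdots\supset V_m$ in $W_c$ with $\dim V_j=m-j+1$ and $V_j\subseteq\widetilde X^i$ whenever $pk_i<j\le pk_{i+1}$, constructing it from the innermost subspace outward: since $\dim\widetilde X^i\ge m-pk_i=\dim V_{pk_i+1}$ and the $\widetilde X^i$ are nested, one can enlarge the subspace built so far by a single vector taken from the appropriate $\widetilde X^i$ at each successive dimension, interpolating the ranges in between. Applying Lemma~\ref{Lemma3.45} to $\widehat T|_{W_c}$ with this flag and the constants $\kappa_j=\rho_i^p$ on $pk_i<j\le pk_{i+1}$, and collecting exponents with $m=pk_s+r$, $0\le r\le p(k_{s+1}-k_s)$, yields
\[
\det\bigl(\widehat T|_{W_c}\bigr)\ \le\ \Bigl[\liminf_{\varepsilon\to0^+}g\bigl(\tfrac1\Theta(1+\varepsilon),m\bigr)\Bigr]\,\rho_0^{p^2k_1}\,\rho_s^{pr}\prod_{i=1}^{s-1}\rho_i^{p^2(k_{i+1}-k_i)}\ =\ \rho_0^{p^2k_1}\,\Xi(m,p),
\]
which equals $\Xi(m,p)$ (or is bounded by it) under the normalization $\rho_0\le1$. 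For the leftover block — either directly from Lemma~\ref{Lemma3.45}, or simply because a norm-nonincreasing map has determinant at most $1$ once $\rho_0\le1$ — the determinant is bounded by a constant depending only on $\mathbb K$, $m$, $p$ and the sequences $k_\bullet,\rho_\bullet$; multiplying the $q$ block estimates together with this constant, and absorbing into $\Upsilon$ the bounded discrepancy between $q$ and $n/p$, gives $\det T^n|_X\le\Upsilon[\Xi(m,p)]^{n/p}$.

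I expect the real work to lie in the two middle steps: verifying that the codimension bounds and the contraction constants genuinely compound under the $p$-fold composition (which is precisely where injectivity on $W_c$ is used), producing one flag simultaneously adapted to all the $\widetilde X^i$, and matching $\prod_j\kappa_j$ exactly to the definition of $\Xi(m,p)$ — in particular accounting for the unavoidable trivial bound $\rho_0$ at the top of each block and for the short leftover block.
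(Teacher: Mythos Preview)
Your argument is essentially the paper's: write $n+1=p\ell+r_0$, bound the determinant of each length-$p$ block $S_k=T_{kp-1}\circ\cdots\circ T_{(k-1)p}$ on the evolving subspace by $\Xi(m,p)$ via Lemma~\ref{Lemma3.45}, bound the leftover block by $\Xi(m,r_0)$, multiply using Lemma~\ref{Lemma3.48}, and absorb the remainder and the gap between $\ell$ and $n/p$ into $\Upsilon$. Your explicit flag construction through the intersected pullbacks $\widetilde X^i$ supplies exactly the step the paper suppresses when it simply asserts $\det S_k|_{X_{k-1}}\le\Xi(m,p)$; the extra factor $\rho_0^{p^2k_1}$ you isolate is a genuine mismatch with the paper's definition of $\Xi$ (whose product begins at $i=1$), but it is immaterial downstream since in Lemma~\ref{Lemma3.47} one takes $m=pk_s$ with $s$ large and $\rho_0^{k_1/k_s}\to1$.
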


\begin{proof}
Suffice to consider the injective case. Write $n+1=p\ell +r$, for
nonnegative integers $\ell ,r$, where $0\leq r\leq p$. We set 
\begin{eqnarray*}
S_{k} &=&T_{kp-1}\circ T_{kp-1}\circ ...\circ T_{k\left( p-1\right) } \\
X_{k} &=&S_{k}\circ ...S_{1}(X)
\end{eqnarray*}%
From Lemma {\ref{Lemma3.45} we get }%
\begin{eqnarray}
\det S_{k}|_{X_{K-1}} &\leq &\Xi (m,p),k=1,...,\ell  \label{1.42} \\
\det T_{n}\circ T_{n-1}\circ T_{p\ell }|_{X_{\ell }} &\leq &\Xi (m,r).
\label{1.43}
\end{eqnarray}%
Applying Lemma {\ref{Lemma3.48} to }\eqref {1.42}, \eqref {1.43} we get 
\begin{equation*}
\det T^{n}|_{X}\leq \Xi (m,r)\left( \Xi (m,p)\right) ^{\ell }.
\end{equation*}%
Now suffice to set 
\begin{equation*}
\Upsilon =\sup_{0\leq r\leq p}\Xi (m,r)\left( \Xi (m,p)\right) ^{-\frac{r-1}{%
p}}.
\end{equation*}
\end{proof}

The following result is known when $\mathbb{K=}%
\mathbb{R}
$, see \cite{kurz1}. We verify that the difference between \eqref {1.52} and %
\eqref {1.53} is inessential.

\begin{lemma}
\label{Lemma3.47}\bigskip Fix $\mathbb{K}$. Let Condition {\ref%
{Condition4.55} hold}, and $\varpi \in (\rho _{\infty },+\infty )$. Then
there exist integers $m,p\geq 1$ such that 
\begin{equation}
\left[ \Xi (m,p)\right] ^{\frac{1}{mp}}<\varpi .  \label{2.20}
\end{equation}%
In particular, for large enough $p$, and $m=pk_{s}$, the ratio%
\begin{equation*}
\frac{\left[ \Xi (m,p)\right] ^{\frac{1}{mp}}}{\left[ \dprod%
\limits_{i=1}^{s-1}\rho _{i}^{(k_{i+1}-k_{i})}\right] ^{\frac{1}{k_{s}}}}
\end{equation*}%
is arbitrarily close to $1$.
\end{lemma}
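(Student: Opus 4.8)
The plan is to prove the ``in particular'' statement directly, since it implies \eqref{2.20}. The key simplification is to take $m=pk_s$, so that the remainder in the definition of $\Xi(m,p)$ is $r=0$ and the factor $\rho_s^{pr}$ disappears; then, since $mp=p^2k_s$, one obtains the clean identity
\[
[\Xi(pk_s,p)]^{\frac1{mp}}=\Big[\liminf_{\varepsilon\to 0^+} g\big(\tfrac1\Theta(1+\varepsilon),pk_s\big)\Big]^{\frac1{p^2k_s}}\cdot\Big[\prod_{i=1}^{s-1}\rho_i^{(k_{i+1}-k_i)}\Big]^{\frac1{k_s}} .
\]
Thus the displayed ratio in the Lemma is exactly the first factor on the right, and everything reduces to showing that this first factor can be made as close to $1$ as we wish.

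First I would fix $\varpi_1\in(\rho_\infty,\varpi)$ and invoke the fact that $\rho_\infty$ is defined as a $\liminf$ over $s$: there exist arbitrarily large $s\ge 1$ with $\big[\prod_{i=1}^{s-1}\rho_i^{(k_{i+1}-k_i)}\big]^{1/k_s}<\varpi_1$. I would fix one such $s$; from now on $k_s$ is a fixed positive integer, and only $p$ is free. Then I would estimate the first factor above: by \eqref{1.52}, $\liminf_{\varepsilon\to0^+}g(\tfrac1\Theta(1+\varepsilon),pk_s)\le (pk_s)!/\Theta^{pk_s}$, and combined with $\log(N!)\le N\log N$ this gives
\[
1\le\Big[\liminf_{\varepsilon\to0^+}g\big(\tfrac1\Theta(1+\varepsilon),pk_s\big)\Big]^{\frac1{p^2k_s}}\le\Big(\frac{pk_s}{\Theta}\Big)^{1/p},
\]
the left inequality coming from $g(\theta,pk_s)\ge|\det I|=1$ whenever $\theta\ge 1$. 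Since $k_s$ and $\Theta$ are fixed, the right-hand side tends to $1$ as $p\to\infty$, which proves the ``in particular'' assertion.

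To finish, I would choose $p$ large enough that the first factor is $<\varpi/\varpi_1$ (possible because $\varpi/\varpi_1>1$ and the factor decreases to $1$); with $m=pk_s$ this yields $[\Xi(m,p)]^{1/(mp)}<(\varpi/\varpi_1)\varpi_1=\varpi$, i.e. \eqref{2.20}. The degenerate cases are painless: if some $\rho_i=0$ with $i\le s-1$ then $\Xi(m,p)=0$ and the bound is trivial, and if $\rho_\infty=0$ one simply takes $\varpi_1\in(0,\varpi)$. The step I expect to matter most is the order of the quantifiers: $s$ must be selected first, depending only on $\varpi$, and only afterwards may $p\to\infty$ with $k_s$ frozen; it is precisely here that the ``inessential'' gap between the crude bound \eqref{1.52} and the sharp Hadamard bound \eqref{1.53} is exploited, since after extracting the $p^2k_s$-th root the factorial (or the $n^{n/2}$) contributes only a factor converging to $1$.
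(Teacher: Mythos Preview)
Your proof is correct and follows essentially the same approach as the paper: set $m=pk_s$ so that $r=0$, identify the ratio with $\big[\liminf_{\varepsilon\to0^+}g(\tfrac1\Theta(1+\varepsilon),pk_s)\big]^{1/(p^2k_s)}$, and let $p\to\infty$ with $s$ fixed. The only cosmetic difference is that the paper invokes Stirling to get the upper bound $(1+o(1))\big(\tfrac{pk_s}{\Theta e}(1+\varepsilon)\big)^{1/p}$, whereas you use the cruder $N!\le N^N$ to obtain $(pk_s/\Theta)^{1/p}$; both tend to $1$, so this is immaterial.
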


\begin{proof}
Set $m=pk_{s}$. Then%
\begin{equation*}
\left[ \Xi (m,p)\right] ^{\frac{1}{mp}}=\left[ \liminf_{\varepsilon
\rightarrow 0^{+}}g(\frac{1}{\Theta }(1+\varepsilon ),pk_{s})\right] ^{\frac{%
1}{p^{2}k_{s}}}\left[ \dprod\limits_{i=1}^{s-1}\rho _{i}^{(k_{i+1}-k_{i})}%
\right] ^{\frac{1}{k_{s}}}.
\end{equation*}%
By the Stirling approximation, we have for any fixed $\varepsilon >0$ 
\begin{equation*}
\left[ \dprod\limits_{i=1}^{s-1}\rho _{i}^{(k_{i+1}-k_{i})}\right] ^{\frac{1%
}{k_{s}}}\leq \left[ \Xi (m,p)\right] ^{\frac{1}{mp}}\leq (1+o(1))\left( 
\frac{pk_{s}}{\Theta e}(1+\varepsilon )\right) ^{\frac{1}{p}}\left[
\dprod\limits_{i=1}^{s-1}\rho _{i}^{(k_{i+1}-k_{i})}\right] ^{\frac{1}{k_{s}}%
},
\end{equation*}%
where $o(1)\rightarrow 0$ as $pk_{s}\rightarrow \infty $. Now for fixed $s$,
we may take arbitrarily large $p$ so that the desired result holds.
\end{proof}

\begin{theorem}
\bigskip \label{Theorem11.29}

Let Condition {\ref{Condition4.55} hold}, $\varpi \in (\rho _{\infty
},+\infty )$, and $c>0$ be a constant. Fix integers $m,p\geq 1$ such that %
\eqref {2.20} holds, as in Lemma {\ref{Lemma3.47}.} Assume that for every
fixed $N>0$, there exist points $y^{N}\in $ $Y_{0}$ such that%
\begin{equation}
|T^{N}(y^{N})|\geq c\varpi ^{N}|y^{N}|.  \label{12.52}
\end{equation}%
Then for some integer $k\leq m-1$ and constant $\omega \in ((\varpi
)^{-1}\Xi (m,p)^{\frac{1}{mp}},+\infty )$, for any $\chi \in ((\varpi
)^{-1}\Xi (m,p)^{\frac{1}{mp}},\omega )$, we have for infinitely many $N,$
that any vector parallel to $T^{N}(y^{N})$ in $S_{Y_{N+1}}$ is$\ \Theta 
\frac{\chi ^{N}}{\omega ^{N}-\chi ^{N}}-$close to a linear span of $k$
vectors $x_{i}^{N}\in S_{Y_{N+1}},i=1,...,k$, with coefficients of norm
uniformly bounded by $\Theta \frac{1}{\omega ^{N}-\chi ^{N}}$. If it is
known there exist $m-1$ image (parallel to $T^{N}(y^{N})$) vectors $%
x_{i}^{N}\in S_{Y_{N+1}},i=1,...,m-1$ such that 
\begin{equation}
A_{N}:=\inf \left( \left\vert \sum_{i=1}^{m-1}\eta _{i}x_{i}^{N}\right\vert
:\max |\eta _{i}|=1,\eta _{i}\in \mathbb{K}\right) \geq \varkappa \varrho
^{N}  \label{3.56}
\end{equation}%
where $\varrho \in (\varpi ^{-1}\Xi (m,p)^{\frac{1}{mp}},1]$ is a known
constant, then the same conclusion holds with $\Theta \frac{\chi ^{N}}{%
\varkappa \varrho ^{N}-\chi ^{N}}$ (instead of $\Theta \frac{\chi ^{N}}{%
\omega ^{N}-\chi ^{N}}$) and $\Theta \frac{1}{\varkappa \varrho ^{N}-\chi
^{N}}$ (instead of $\Theta \frac{1}{\omega ^{N}-\chi ^{N}}$) respectively,
for any $\chi \in (\varpi ^{-1}\Xi (m,p)^{\frac{1}{mp}},\varrho )$.
\end{theorem}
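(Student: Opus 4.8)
The plan is to exploit the clash between the determinant upper bound $\det T^{N}|_{X}\le\Upsilon\,\Xi(m,p)^{N/p}$ of Lemma \ref{Lemma3.46} and a lower bound for the same determinant that grows by a factor $\gtrsim\varpi^{N}$ for every ``fast'' direction present and by a factor reflecting their mutual independence: since \eqref{2.20} makes $\Xi(m,p)^{1/mp}$ strictly below $\varpi$, no $m$-dimensional family of fast, quantitatively independent directions can persist, so at most $m-1$ of them can ever be exhibited and everything fast must cluster near such a family. First I would normalise: for each $N$ put $z^{N}:=\lambda_{N}y^{N}$ with $v^{N}:=T^{N}(z^{N})\in S_{Y_{N+1}}$, possible by Definition \ref{Definition7.19}; then \eqref{12.52} gives $|z^{N}|\le\frac1c\varpi^{-N}$, so $v^{N}$ (and each direction later split off from it) is the $T^{N}$-image of a vector of norm $\lesssim\varpi^{-N}$.

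Then, for each fixed $N$, I would run a greedy extraction: at stage $j$ it carries $x_{1}^{N},\dots,x_{j}^{N}\in S_{Y_{N+1}}$, each a $T^{N}$-image of a vector of norm $\lesssim\varpi^{-N}$ and ``quantitatively $\omega^{N}$-independent'' (the infimum in \eqref{3.56} is $\ge$ const$\cdot\,\omega^{N}$); if $v^{N}$ already lies within the target distance of $\mathrm{span}(x_{1}^{N},\dots,x_{j}^{N})$ with coefficients within the target bound I stop and set $k:=j$, and otherwise the portion of $v^{N}$ not captured still grows at rate $\gtrsim\omega^{N}$ --- here is where the \emph{fast} growth in \eqref{12.52}, rather than mere non-decay, is consumed, together with Condition \ref{Condition4.55} --- so I adjoin a normalised representative of it as $x_{j+1}^{N}$ and iterate. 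The procedure cannot reach stage $m$: that would supply $m$ vectors $x_{i}^{N}=T^{N}(\tilde x_{i}^{N})$ in $S_{Y_{N+1}}$ that are $\omega^{N}$-independent with $|\tilde x_{i}^{N}|\lesssim\varpi^{-N}$, and combining Lemma \ref{Lemma3.44} (in $\mathrm{span}(x_{1}^{N},\dots,x_{m}^{N})$) with the pullback definition of $\det$ and the multiplicativity/normalisation of Lemma \ref{Lemma3.48} would force $\det T^{N}|_{\mathrm{span}(\tilde x_{1}^{N},\dots,\tilde x_{m}^{N})}$ above $\Upsilon\,\Xi(m,p)^{N/p}$ for all large $N$ --- this is precisely where $\omega>\varpi^{-1}\Xi(m,p)^{1/mp}$ enters --- contradicting Lemma \ref{Lemma3.46}. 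Hence the extraction halts at some $k=k(N)\in\{1,\dots,m-1\}$; as $k(N)$ takes only finitely many values, one value $k$ recurs for infinitely many $N$, and along that subsequence the conclusion holds with this $k$ and with $\omega$ any admissible value $>\varpi^{-1}\Xi(m,p)^{1/mp}$. The distance and coefficient estimates come out by applying the same determinant comparison on $\mathrm{span}(\tilde x_{1}^{N},\dots,\tilde x_{k}^{N},z^{N})$: the uncaptured part of $v^{N}$ has norm $\lesssim(\chi/\omega)^{N}$ at the first sub-step, $\lesssim(\chi/\omega)^{2N}$ at the next, and so on --- an iteration forced by the absence of metric projections in a general normed space --- and summing the at most $k$ terms against $\sum_{j\ge1}(\chi/\omega)^{jN}=\chi^{N}/(\omega^{N}-\chi^{N})$ gives the stated distance bound, while Lemma \ref{Lemma3.44} converts the $\omega^{N}$-independence into the coefficient bound $\Theta/(\omega^{N}-\chi^{N})$ through \eqref{11.00}, the factor $\Theta$ being, as in Lemma \ref{Lemma3.45}, the cost of working with $S_{Y_{N+1}}=\{1\ge|v|\ge\Theta\}$.

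For the refinement, if $m-1$ images $x_{i}^{N}\in S_{Y_{N+1}}$ with $A_{N}\ge\varkappa\varrho^{N}$ are handed over in advance, there is nothing to extract --- they already are the quantitatively independent family, with scale $\varkappa\varrho^{N}$ in place of $\omega^{N}$ --- and I would run only the projection-of-$v^{N}$ step; it cannot fail, since a failure would adjoin an $m$-th fast, quantitatively independent direction and contradict Lemma \ref{Lemma3.46} exactly as before (now via $\varrho>\varpi^{-1}\Xi(m,p)^{1/mp}$, $\varrho\le1$). Carrying $\varkappa\varrho^{N}$ through the bookkeeping in place of $\omega^{N}$ yields $\Theta\,\chi^{N}/(\varkappa\varrho^{N}-\chi^{N})$ and $\Theta/(\varkappa\varrho^{N}-\chi^{N})$ for any $\chi\in(\varpi^{-1}\Xi(m,p)^{1/mp},\varrho)$, now for every large $N$ since here $k=m-1$ is fixed.

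The part I expect to be the real obstacle is twofold. First, the assertion that ``$v^{N}$ not captured by the span built so far'' forces the residual to keep growing at rate $\gtrsim\omega^{N}$, since that is exactly what deposits the extra $\varpi^{N}$ into the determinant and, more delicately, what must be carried out while keeping the relevant preimages of norm $\lesssim\varpi^{-N}$, so that the coefficient-versus-distance trade-off in the iteration does not spoil the determinant estimate. Second, verifying that every multiplicative loss in the chain --- the $g(\tfrac1\Theta(1+\varepsilon),m)$ and Stirling factors of Lemmata \ref{Lemma3.45} and \ref{Lemma3.47}, the constant $\Upsilon$ of Lemma \ref{Lemma3.46}, the at most $m$ stages, and the $\Theta$-sphere --- is genuinely independent of $N$, so that all of it is absorbed by the arbitrarily small gaps among $\varpi^{-1}\Xi(m,p)^{1/mp}$, $\chi$, and $\omega$ (respectively $\varrho$). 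With those settled, extracting the closed-form bounds and passing from ``all large $N$'' to ``infinitely many $N$'' by finiteness of the possible $k$ are routine.
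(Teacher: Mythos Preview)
Your core strategy---pit the determinant upper bound of Lemma~\ref{Lemma3.46} against a lower bound forced by $m$ fast, quantitatively independent directions, conclude that at most $m-1$ such directions can coexist, and then approximate any further fast image by them---is exactly the paper's. The paper also derives the key inequality via $1=\det(T^{-N}|_{X^{N}})\det(T^{N}|_{Z^{N}})$: from the $m$ normalized images $x_{i}^{N}$ one reads off $|T^{-N}|_{X^{N}}|\le m/(A_{N}c\varpi^{N})$, feeds this through Lemmata~\ref{Lemma3.45} and \ref{Lemma3.46}, and obtains $A_{N}\lesssim(\varpi^{-1}\Xi(m,p)^{1/mp})^{N}$. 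It then \emph{defines} $D$ to be the largest dimension for which some $\omega>\varpi^{-1}\Xi(m,p)^{1/mp}$ and some family achieve $A_{N}\ge\omega^{N}$ for infinitely many $N$; the computation just done forces $D\le m-1$.

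Where your route diverges is the distance estimate. The paper needs no greedy extraction, no iterative projection, and no geometric series. Having fixed a $D$-family with $A_{N}\ge\omega^{N}$, take any further fast image $\psi^{N}\in S_{Y_{N+1}}$; by maximality of $D$ the enlarged $(D{+}1)$-family must admit some combination $\bigl|\sum_{i}\eta_{i}x_{i}^{N}+\mu\psi^{N}\bigr|\le\chi^{N}$ with $\max(|\eta_{i}|,|\mu|)=1$. The triangle inequality and $\bigl|\sum_{i}\eta_{i}x_{i}^{N}\bigr|\ge\omega^{N}$ give $|\mu|\ge\Theta^{-1}(\omega^{N}-\chi^{N})$, and dividing through by $\mu$ yields both the distance bound $\Theta\chi^{N}/(\omega^{N}-\chi^{N})$ and the coefficient bound $\Theta/(\omega^{N}-\chi^{N})$ in one stroke. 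Your geometric series $\sum_{j\ge1}(\chi/\omega)^{jN}$ happens to equal the same expression, but the mechanism you describe (residuals shrinking by $(\chi/\omega)^{N}$ at each sub-step) is not substantiated and is unnecessary; moreover, the worry you flag---that adjoined residuals must remain fast images with preimages of norm $\lesssim\varpi^{-N}$---never arises in the paper because no residuals are ever formed. The refinement with given $x_{i}^{N}$ and $A_{N}\ge\varkappa\varrho^{N}$ is handled identically, replacing $\omega^{N}$ by $\varkappa\varrho^{N}$.
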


\begin{proof}
Assume there exist $m$ vectors $y_{i}^{N}\neq 0,i=1,...,m,$ so that %
\eqref{12.52} holds. Then we have%
\begin{equation}
|T^{N}y_{i}^{N}|\geq c\varpi ^{N}|y_{i}^{N}|.  \label{2.51}
\end{equation}%
We set 
\begin{eqnarray*}
X^{N} &:&=\limfunc{span}\{T^{N}y_{i}^{N}\}, \\
Z^{N} &:&=\limfunc{span}\{y_{i}^{N}\}.
\end{eqnarray*}%
Now fix a collection $x_{i}^{N}\in S_{Y_{N+1}}$ s.t. $x_{i}^{N}$ is parallel
to $T^{N}y_{i}^{N},$ and write $z_{i}^{N}$ for their preimages by $T^{N}$. \
We write%
\begin{equation*}
A_{N}:=\inf \left( \left\vert \sum_{i=1}^{m}\eta _{i}x_{i}^{N}\right\vert
:\max |\eta _{i}|=1,\eta _{i}\in \mathbb{K}\right) .
\end{equation*}%
Then for 
\begin{equation*}
x=\sum_{i=1}^{m}r_{i}x_{i}^{N}
\end{equation*}%
we have 
\begin{equation}
|x|=\left\vert \sum_{i=1}^{m}r_{i}x_{i}^{N}\right\vert
=\max_{i=1,...,m}|r_{i}|\left\vert \sum_{k=1}^{m}\frac{r_{k}}{%
\max_{i=1,...,m}|r_{i}|}x_{k}^{N}\right\vert \geq
\max_{i=1,...,m}|r_{i}|A_{N}.  \label{2.52}
\end{equation}%
Assume $A_{N}>0$. We immediately deduce from \eqref {2.51}, \eqref {2.52}, 
\begin{eqnarray}
m|x| &\geq &A_{N}\sum_{i=1}^{m}|r_{i}x_{i}^{N}|\geq A_{N}c\varpi
^{N}|z|,\forall x\in X^{N}  \notag \\
\frac{m}{A_{N}c(\varpi )^{N}}|x| &\geq &|T^{-N}|_{X^{N}}(x)|,\forall x\in
X^{N}\text{.}  \label{2.49}
\end{eqnarray}%
where 
\begin{equation*}
z=\sum_{i=1}^{m}r_{i}z_{i}^{N}.
\end{equation*}%
Then from inequality \eqref {2.49}, Lemmata {\ref{Lemma3.45}, \ref{Lemma3.46}%
, we obtain successively} 
\begin{eqnarray*}
1 &=&\det \left( T^{-N}|_{X^{N}}\circ T^{N}|_{Z^{N}}\right) \\
&=&\det \left( T^{-N}|_{X^{N}})\det (T^{N}|_{Z^{N}}\right) \\
&\leq &\left[ \liminf_{\varepsilon \rightarrow 0^{+}}g(\frac{1}{\Theta }%
(1+\varepsilon ),m)\right] \left[ \frac{m}{A_{N}c\varpi ^{N}}\right]
^{m}\Upsilon \left[ \Xi (m,p)\right] ^{\frac{N}{p}} \\
&=&A_{N}^{-m}\Upsilon m^{m}c^{-m}\left[ \liminf_{\varepsilon \rightarrow
0^{+}}g(\frac{1}{\Theta }(1+\varepsilon ),m)\right] \left\{ (\varpi
)^{-1}\Xi (m,p)^{\frac{1}{mp}}\right\} ^{mN}\text{.}
\end{eqnarray*}%
So $A_{N}$ decays exponentially in $N$, as $\left\{ (\varpi )^{-1}\Xi (m,p)^{%
\frac{1}{mp}}\right\} ^{N}$.

Let $D$ denote the highest dimension for which the exists some constant $%
\omega \in (\varpi ^{-1}\Xi (m,p)^{\frac{1}{mp}},+\infty )$ s.t. for
infinitely many $N>0$, we have for some such $A_{N}$, constructed as above
for each $N$, that $A_{N}\geq \omega ^{N}$. Then by the above calculations
we must have $1\leq D\leq m-1$ (the case $D=0$, that is when there are no
vectors st \eqref {12.52} holds, is trivial). Fix such a sequence of $N,$
and collections $x_{i}^{N},i=1,...,d$, so that%
\begin{equation*}
\inf \left( \left\vert \sum_{i=1}^{D}\eta _{i}x_{i}^{N}\right\vert :\max
|\eta _{i}|=1,\eta _{i}\in \mathbb{K}\right) \geq \omega ^{N}.
\end{equation*}%
Assume consider another vector $\psi ^{N}$, so that together with $%
x_{i}^{N},i=1,...,D$, we again have $\psi ^{N}\in S_{Y_{N+1}}$ and parallel
to an image $T^{N}y^{N}$, so that \eqref {12.52} holds. Now for appropriate $%
\eta _{i}(N),\mu (N)\in \mathbb{K}$ with $\max_{i=1,...,D}|\eta
_{i}(N)|,|\mu (N)|=1,$ by the definition of $D,$ the quantity $\left\vert
\left( \sum_{i=1}^{D}\eta _{i}(N)x_{i}^{N}\right) +\mu (N)\psi
^{N}\right\vert $ can be made $\leq \chi ^{N}$, \ for any $\chi \in (\theta
^{-1}\Xi (m,p)^{\frac{1}{mp}},\omega )$. Hence we obtain the following
inequality%
\begin{eqnarray*}
\left\vert \mu (N)\psi ^{N}\right\vert &\geq &\left\vert \sum_{i=1}^{D}\eta
_{i}(N)x_{i}^{N}\right\vert -\left\vert \left( \sum_{i=1}^{D}\eta
_{i}(N)x_{i}^{N}\right) +\mu (N)\psi ^{N}\right\vert \\
&\geq &\omega ^{N}-\chi ^{N}.
\end{eqnarray*}%
We deduce 
\begin{equation*}
\left\vert \mu (N)\right\vert \geq \frac{1}{\Theta }\left( \omega ^{N}-\chi
^{N}\right) ,
\end{equation*}%
concluding%
\begin{equation*}
\left\vert \left( \sum_{i=1}^{D}\frac{\eta _{i}(N)}{\mu (N)}x_{i}^{N}\right)
+\psi ^{N}\right\vert \leq \Theta \frac{\chi ^{N}}{\omega ^{N}-\chi ^{N}}.
\end{equation*}
\end{proof}

\subsection{\protect\bigskip Remarks on nonlinear systems}

The calculations of the above Theorem can easily be adapted to either
forward or backwards evolution, eventually regarding inductive or projective
limits (cf. \cite{dkps}). For the nonlinear case, that is, when $T$
represents the derivative of a nonlinearity, it is plausible we have that
for every fixed $N>0$, if $|x-y|\leq \delta $,$x,y\in K$ for $\delta (N)$
small enough, then 
\begin{equation*}
|T^{N}(x-y)|\geq c\varpi ^{N}|x-y|.
\end{equation*}%
holds, where $K$ $\subset Y_{0}$. If one can relate the $\delta $ and $N,$
which depend on the higher derivatives of the nonlinearity, then it is
possible to calculate dimension estimates for $K$, or related inductive
limits.

\begin{lemma}
\label{Lemma4.45}Assume $f_{N}:Y_{N}\rightarrow Y_{N+1}$ is a nonlinearity
such that 
\begin{eqnarray}
|f_{N}(x)-f_{N}(y)-T_{N}(x-y)| &\leq &M|x-y|^{\Lambda },  \label{1.16} \\
|f_{N}(x)-f_{N}(y)-T_{N}(x-y)| &\leq &L|x-y|.  \label{1.17}
\end{eqnarray}%
for some positive constants $L,M>0$ and $\Lambda >1$. Let us write $%
f^{N}:=f_{N}\circ ...\circ f_{0}$. \ Assume $|T_{N}|=\rho _{0}\leq C$ holds
uniformly. Then for fixed $\delta >0,$ there exists a constant $\gamma >0$
st if for arbitrarily large $N$ we have 
\begin{equation}
|x-y|\leq \gamma \frac{A^{\frac{N}{\Lambda -1}}}{\left( L+C\right) ^{\frac{%
\Lambda }{\Lambda -1}N}}\text{ and }|f^{N}(x)-f^{N}(y)|\geq \delta A^{N}|x-y|
\label{4.25}
\end{equation}%
then we may conclude (for any fixed $\delta ^{\prime }\in (0,\delta )$) 
\begin{equation*}
|T^{N}(x-y)|\geq \delta ^{\prime }A^{N}|x-y|.
\end{equation*}%
Furthermore, $|f^{N}(x)-f^{N}(y)-T^{N}\left( x-y\right) |$ is bounded by $%
\eta \left( L+C\right) ^{N\Lambda }|x-y|^{\Lambda }$ for $N\geq N_{0}$ where 
$\eta $ is the constant%
\begin{equation*}
\sup_{N\geq N_{0}}\frac{MC^{N}\left( 1+\frac{\left( L+C\right) ^{\Lambda }}{C%
}\frac{\left[ \frac{\left( L+C\right) ^{\Lambda }}{C}\right] ^{N}-1}{\frac{%
\left( L+C\right) ^{\Lambda }}{C}-1}\right) }{\left( L+C\right) ^{N\Lambda }}%
.
\end{equation*}%
.
\end{lemma}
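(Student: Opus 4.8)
The plan is a discrete Gronwall/telescoping argument comparing the nonlinear orbit difference with the linear one. Put $a_{j}:=f^{j}(x)-f^{j}(y)$ and $b_{j}:=T^{j}(x-y)$ for $j\geq 0$, with the convention $f^{-1}=T^{-1}=\mathrm{id}_{Y_{0}}$ so that $a_{-1}=b_{-1}=x-y$, and let $e_{j}:=a_{j}-b_{j}$, whence $e_{-1}=0$. Applying \eqref{1.16} and \eqref{1.17} to the pair of points $f^{j-1}(x),f^{j-1}(y)$ gives $a_{j}=T_{j}(a_{j-1})+w_{j}$ with $|w_{j}|\leq M|a_{j-1}|^{\Lambda}$ and simultaneously $|w_{j}|\leq L|a_{j-1}|$; subtracting $b_{j}=T_{j}(b_{j-1})$ produces the error recursion $e_{j}=T_{j}(e_{j-1})+w_{j}$. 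Here \eqref{1.17} will furnish a crude uniform growth bound on the $|a_{j}|$, while \eqref{1.16} makes the perturbation negligible once $|x-y|$ is small.

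First I would record the crude a priori bound: from $|a_{j}|\leq |T_{j}|\,|a_{j-1}|+|w_{j}|\leq (L+C)|a_{j-1}|$ and $a_{-1}=x-y$, one gets $|a_{j}|\leq (L+C)^{j+1}|x-y|$, hence $|a_{j-1}|^{\Lambda}\leq (L+C)^{j\Lambda}|x-y|^{\Lambda}$. Iterating $|e_{j}|\leq C|e_{j-1}|+M|a_{j-1}|^{\Lambda}$ from $e_{-1}=0$ and inserting this,
\[
|e_{N}|\leq M|x-y|^{\Lambda}\sum_{j=0}^{N}C^{N-j}(L+C)^{j\Lambda}=MC^{N}|x-y|^{\Lambda}\sum_{j=0}^{N}\Bigl(\tfrac{(L+C)^{\Lambda}}{C}\Bigr)^{j}.
\]
Summing the geometric series and rewriting $\sum_{j=0}^{N}t^{j}=1+t\,\tfrac{t^{N}-1}{t-1}$ with $t=(L+C)^{\Lambda}/C$ reproduces exactly the numerator in the definition of $\eta$; multiplying and dividing by $(L+C)^{N\Lambda}$ yields $|e_{N}|\leq \eta\,(L+C)^{N\Lambda}|x-y|^{\Lambda}$ for $N\geq N_{0}$, which is the ``furthermore'' assertion. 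Note this step uses neither hypothesis of \eqref{4.25} and holds for all $x,y$.

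For the main claim, since \eqref{4.25} is assumed for arbitrarily large $N$ we may take $N\geq N_{0}$, and the reverse triangle inequality together with the second part of \eqref{4.25} gives
\[
|T^{N}(x-y)|=|b_{N}|\geq |a_{N}|-|e_{N}|=|f^{N}(x)-f^{N}(y)|-|e_{N}|\geq \delta A^{N}|x-y|-\eta\,(L+C)^{N\Lambda}|x-y|^{\Lambda}.
\]
It only remains to absorb the error term: from the first part of \eqref{4.25}, $|x-y|^{\Lambda-1}\leq \gamma^{\Lambda-1}A^{N}(L+C)^{-N\Lambda}$, so choosing $\gamma:=\bigl(\tfrac{\delta-\delta'}{\eta}\bigr)^{1/(\Lambda-1)}$ forces $\eta\,(L+C)^{N\Lambda}|x-y|^{\Lambda}\leq (\delta-\delta')A^{N}|x-y|$, and the displayed estimate collapses to $|T^{N}(x-y)|\geq \delta'A^{N}|x-y|$.

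I do not expect a genuine obstacle here: the content is entirely the two telescoping estimates above together with a balancing of exponents. The points that need care are the index bookkeeping (since $f^{N}=f_{N}\circ\cdots\circ f_{0}$ is a composition of $N+1$ maps, the a priori bound on $|a_{j}|$ carries one extra factor of $L+C$, and this is precisely what makes the geometric sum line up verbatim with the stated $\eta$) and the tacit nondegeneracy $(L+C)^{\Lambda}\neq C$---equivalently $(L+C)^{\Lambda}>C$, automatic once $L+C\geq 1$---which is what keeps $\eta$ finite and hence $\gamma$ positive. One should also note that, as written, $\gamma$ is permitted to depend on $\delta-\delta'$; the statement is to be read with $\delta'$ fixed together with $\delta$ before $\gamma$ is selected.
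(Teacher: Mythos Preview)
Your argument is correct and is essentially the same as the paper's: both establish the crude Lipschitz bound $|f^{j}(x)-f^{j}(y)|\leq(L+C)^{j+1}|x-y|$, feed it into the first-order recursion $e_{j}\leq Ce_{j-1}+M|a_{j-1}|^{\Lambda}$, sum the resulting geometric series to obtain the stated $\eta$-bound, and then balance exponents via \eqref{4.25} to absorb the error. Your indexing convention with $e_{-1}=0$ is a cosmetic variant of the paper's start at $e_{0}\leq M|x-y|^{\Lambda}$, and your explicit choice $\gamma=\bigl((\delta-\delta')/\eta\bigr)^{1/(\Lambda-1)}$ simply spells out what the paper leaves implicit.
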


\begin{proof}
By assumption we have 
\begin{equation}
|f^{N}(x)-f^{N}(y)|\leq (L+C)^{N+1}|x-y|.  \label{1.21}
\end{equation}%
Set 
\begin{equation*}
e_{N}=|f^{N}(x)-f^{N}(y)-T^{N}\left( x-y\right) |.
\end{equation*}%
Then \eqref {1.16}, \eqref {1.17}, imply that%
\begin{equation}
e_{N+1}=|f^{N+1}(x)-f^{N+1}(y)-T^{N+1}\left( x-y\right) |\leq
M|f^{N}(x)-f^{N}(y)|^{\Lambda }+Ce_{N}.  \label{1.22}
\end{equation}%
We now apply wellknown techniques to solve the first order difference
inequality \eqref {1.22}, obtaining%
\begin{eqnarray*}
\frac{e_{N+1}}{C^{N+1}} &\leq &\frac{1}{C^{N+1}}M|f^{N}(x)-f^{N}(y)|^{%
\Lambda }+\frac{e_{N}}{C^{N}}, \\
\frac{e_{N}}{C^{N}} &\leq &e_{0}+M\sum_{k=0}^{N-1}\frac{1}{C^{k+1}}%
|f^{k}(x)-f^{k}(y)|^{\Lambda }.
\end{eqnarray*}%
Using \eqref {1.16}, \eqref {1.21}, and the definition of $e_{N}$, the
following computations are then immediate:%
\begin{eqnarray*}
\frac{e_{N}}{C^{N}} &\leq &e_{0}+M\sum_{k=0}^{N-1}\frac{1}{C^{k+1}}%
|f^{k}(x)-f^{k}(y)|^{\Lambda } \\
&\leq &M|x-y|^{\Lambda }+M\sum_{k=0}^{N-1}\frac{1}{C^{k+1}}%
|f^{k}(x)-f^{k}(y)|^{\Lambda } \\
&\leq &M|x-y|^{\Lambda }+M\sum_{k=0}^{N-1}\frac{1}{C^{k+1}}(L+C)^{\Lambda
k+\Lambda }|x-y|^{\Lambda } \\
&\leq &M|x-y|^{\Lambda }\left( 1+\sum_{k=0}^{N-1}\frac{1}{C^{k+1}}%
(L+C)^{\Lambda k+\Lambda }\right) \\
&\leq &M|x-y|^{\Lambda }\left( 1+\sum_{k=0}^{N-1}\left[ \frac{\left(
L+C\right) ^{\Lambda }}{C}\right] ^{k+1}\right) \\
&\leq &M|x-y|^{\Lambda }\left( 1+\frac{\left( L+C\right) ^{\Lambda }}{C}%
\frac{\left[ \frac{\left( L+C\right) ^{\Lambda }}{C}\right] ^{N}-1}{\frac{%
\left( L+C\right) ^{\Lambda }}{C}-1}\right) .
\end{eqnarray*}%
We may conclude 
\begin{equation*}
e_{N}\leq MC^{N}\left( 1+\frac{\left( L+C\right) ^{\Lambda }}{C}\frac{\left[ 
\frac{\left( L+C\right) ^{\Lambda }}{C}\right] ^{N}-1}{\frac{\left(
L+C\right) ^{\Lambda }}{C}-1}\right) |x-y|^{\Lambda }.
\end{equation*}%
Notice that as $\left( 1+\frac{\left( L+C\right) ^{\Lambda }}{C}\frac{\left[ 
\frac{\left( L+C\right) ^{\Lambda }}{C}\right] ^{N}-1}{\frac{\left(
L+C\right) ^{\Lambda }}{C}-1}\right) $ is asymptotic to $\left[ \frac{\left(
L+C\right) ^{\Lambda }}{C}\right] ^{N}$, the right hand side of \eqref {4.26}
is asymptotic to $\left( L+C\right) ^{N\Lambda }|x-y|^{\Lambda }$.
Substituting \eqref {4.25}%
\begin{equation}
e_{N}\leq |x-y|\gamma MC^{N}\left( 1+\frac{\left( L+C\right) ^{\Lambda }}{C}%
\frac{\left[ \frac{\left( L+C\right) ^{\Lambda }}{C}\right] ^{N}-1}{\frac{%
\left( L+C\right) ^{\Lambda }}{C}-1}\right) \frac{A^{N}}{\left( L+C\right)
^{\Lambda N}}.  \label{4.26}
\end{equation}%
Similarly, the right hand side of \eqref {4.26} can be made smaller than $%
\left( \delta -\delta ^{\prime }\right) A^{N}$.
\end{proof}

\begin{remark}
\label{Remark4.28} In the above Lemma, the derivatives of $f_{N}$ at either
the point $x$ or the point $y$ could fullfill the role of $T_{N}$ in \eqref
{1.16}, \eqref{1.17}. In the Corollary below, time may be considered as
running in the opposite direction, though this is no more than a
re-indexing, considering the maps $T_{n},f_{n}:Y_{n+1}\rightarrow Y_{n},$and 
$T^{n}:T_{0}\circ ...\circ T_{n},f^{n}:f_{0}\circ ...\circ f_{n}$. We shall
be assuming the derivatives at any point satisfy the assumptions of Lemma {%
\ref{Lemma4.45} as well as} Condition {\ref{Condition4.55} for the sequence }%
$f_{N}^{\prime }(x),f_{N}^{\prime }(f_{N+1}(x)),f_{N}^{\prime
}(f_{N+1}(f_{N+2}(x))),...,$ though we will be suppressing the space
dependence for simpler notation.
\end{remark}

\begin{definition}[\protect\cite{pertti, ps}]
Let $A$ be a compact (subset of) metric space. For $\varepsilon >0$, set $%
K(\varepsilon ) $ to be the least number of $\varepsilon $ balls needed to
cover $A$. We define the Minkowski dimension of $A$ to be the quantity%
\begin{equation*}
\dim _{M}A:=\limsup_{\epsilon \rightarrow 0^{+}}\frac{\ln K(\epsilon )}{-\ln
(\epsilon )}.
\end{equation*}
\end{definition}

\begin{corollary}
\bigskip \label{Corollary11.35}Let $\mathbb{K=}%
\mathbb{R}
$ and $G$ be a compact subset of $Y_{0}$. Using the same notation as in
Lemma {\ref{Lemma4.45}, } Remark {\ref{Remark4.28}, }we assume the
hypotheses of Lemma {\ref{Lemma4.45},} Condition {\ref{Condition4.55} }%
(uniformly in the space variable $x$). Let $\varpi \in (\rho _{\infty
},+\infty )$, and fix integers $m,p\geq 1$ such that \eqref {2.20} holds, as
in Lemma {\ref{Lemma3.47}, and further} \eqref {3.56} with constant $\varrho 
$ holds as in Theorem {\ref{Theorem11.29}}(again uniformly). Assume that for
a fixed $\iota >0$, for every $x,y\in G$, we have%
\begin{equation}
\iota \varpi ^{N}|f^{-N}(x)-f^{-N}(y)|\leq |x-y|,N=1,2,...  \notag
\end{equation}%
where $f^{-N}(x)$ denote preimages by the maps $f^{N}$. Then,%
\begin{equation*}
\dim _{M}G\leq \left( m-1\right) \frac{\ln \left[ (\varpi )^{-1}\Xi (m,p)^{%
\frac{1}{mp}}\right] }{\ln \left[ (\varpi )^{-1}\Xi (m,p)^{\frac{1}{mp}}%
\right] -\ln \varrho }.
\end{equation*}
\end{corollary}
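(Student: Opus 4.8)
We plan to proceed as follows. The idea is to estimate the covering number $K(\epsilon)$ of $G$ for small $\epsilon$ by a \emph{self-similar} covering scheme built on the linear estimate of Theorem~\ref{Theorem11.29}, and then read off $\dim_{M}G$. Write $a:=(\varpi)^{-1}\Xi(m,p)^{1/(mp)}<1$. I will fix $\chi\in(a,\varrho)$, prove $\dim_{M}G\le(m-1)\frac{\ln\chi}{\ln\chi-\ln\varrho}$, and then let $\chi\to a^{+}$; since $\varrho\le 1$ this right-hand side is non-decreasing in $\chi$ on $(a,\varrho)$, so its infimum is exactly the asserted bound.

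The first and main step is a structural fact: for all large $N$ there is a scale $\delta_{N}^{\ast}>0$, tending to $0$, so that for every $x_{0}\in G$ and every $y\in G$ with $|x_{0}-y|\le\delta_{N}^{\ast}$, the normalized difference $\frac{x_{0}-y}{|x_{0}-y|}$ lies within $\beta_{N}:=\mathrm{const}\cdot\Theta\frac{\chi^{N}}{\varkappa\varrho^{N}-\chi^{N}}$ of an $(m-1)$-dimensional subspace $W_{N}(x_{0})\subset Y_{0}$ spanned by vectors of norm $\le 1$ satisfying \eqref{3.56}, the approximating vector having coordinates of norm $\le C_{N}:=\mathrm{const}\cdot\Theta\frac{1}{\varkappa\varrho^{N}-\chi^{N}}$ in that basis. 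To get this I pass to the preimages $u=f^{-N}(x_{0})$, $v=f^{-N}(y)$, so that $\iota\varpi^{N}|u-v|\le|x_{0}-y|$ by hypothesis, hence $|u-v|\le|x_{0}-y|/(\iota\varpi^{N})$. Choosing $\delta_{N}^{\ast}$ small enough forces $(u,v)$ into the regime of Lemma~\ref{Lemma4.45} (with $A=\varpi$, $\delta=\iota$) for the derivative cocycle $T^{N}$ along the orbit of $u$; Lemma~\ref{Lemma4.45} then yields both $|T^{N}(u-v)|\ge\iota'\varpi^{N}|u-v|$ for a fixed $\iota'<\iota$ --- so Theorem~\ref{Theorem11.29} (with $c=\iota'$ and hypothesis \eqref{3.56}) applies and places $\frac{T^{N}(u-v)}{|T^{N}(u-v)|}$ within $\beta_{N}$ of $W_{N}(u):=\operatorname{span}\{x_{i}^{N}\}$ with coordinates $\le C_{N}$ --- and the correction bound $e_{N}:=|f^{N}(u)-f^{N}(v)-T^{N}(u-v)|\le\eta(L+C)^{N\Lambda}|u-v|^{\Lambda}$, which together with $|x_{0}-y|=|f^{N}(u)-f^{N}(v)|\ge\iota\varpi^{N}|u-v|$ gives
\[
\frac{e_{N}}{|x_{0}-y|}\le\frac{\eta}{\iota^{\Lambda}}\Big(\frac{(L+C)^{\Lambda}}{\varpi^{\Lambda}}\Big)^{N}|x_{0}-y|^{\Lambda-1}.
\]
The decisive observation --- and the reason $\delta_{N}^{\ast}$ must be a factor $\beta_{N}^{1/(\Lambda-1)}$ finer than the scale $\delta_{N}\asymp(\varpi/(L+C))^{\Lambda N/(\Lambda-1)}$ dictated by Lemma~\ref{Lemma4.45} alone --- is to take $\delta_{N}^{\ast}\asymp\beta_{N}^{1/(\Lambda-1)}\delta_{N}$, so the displayed quantity is $\le\beta_{N}$ whenever $|x_{0}-y|\le\delta_{N}^{\ast}$. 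Writing $x_{0}-y=T^{N}(u-v)+\big(f^{N}(u)-f^{N}(v)-T^{N}(u-v)\big)$ and using $|T^{N}(u-v)|\le 2|x_{0}-y|$, the claim follows with $W_{N}(x_{0}):=W_{N}(u)$. (Comparing $\iota\varpi^{N}|u-v|\le|x_{0}-y|$ with $|x_{0}-y|\le(L+C)^{N+1}|u-v|$ forces $\varpi\le L+C$, so indeed $\delta_{N}^{\ast}\to0$.)

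Granting this, cover $G$ by finitely many balls of radius $\delta_{N}^{\ast}/2$. In a ball $B(x_{0},r)$ with $r\le\delta_{N}^{\ast}/2$, every $y\in G\cap B$ has $\frac{x_{0}-y}{|x_{0}-y|}$ within $O(\beta_{N})$ of $W_{N}(x_{0})$ with coordinates $O(C_{N})$, so $G\cap B$ lies in $x_{0}-W_{N}(x_{0})+B(0,O(\beta_{N}r))$ with the $W_{N}(x_{0})$-coordinates of the differences bounded by $O(C_{N}r)$; covering the resulting box in $W_{N}(x_{0})\cong\mathbb{R}^{m-1}$ by coordinate subcubes of side comparable to $\beta_{N}r$ covers $G\cap B$ by $K_{N}$ balls of radius comparable to $\beta_{N}r$, where $K_{N}=(\mathrm{const}\cdot C_{N}/\beta_{N})^{m-1}=(\mathrm{const})^{m-1}\chi^{-N(m-1)}$ (since $C_{N}/\beta_{N}\asymp\chi^{-N}$). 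As this estimate is valid at \emph{every} radius $\le\delta_{N}^{\ast}/2$ with the \emph{same} $N$, it iterates: $j$-fold iteration gives $K(\lambda_{N}^{\,j}\delta_{N}^{\ast}/2)\le K(\delta_{N}^{\ast}/2)\,K_{N}^{\,j}$ with $\lambda_{N}=\mathrm{const}\cdot\beta_{N}$, so letting $j\to\infty$ with $N$ fixed ($\lambda_N<1$),
\[
\dim_{M}G\le\frac{\ln K_{N}}{\ln(1/\lambda_{N})}.
\]
Since $\chi<\varrho$ gives $\varkappa\varrho^{N}-\chi^{N}\sim\varkappa\varrho^{N}$, one has $\ln K_{N}=(m-1)N\ln(1/\chi)+O(1)$ and $\ln(1/\lambda_{N})=N\ln(\varrho/\chi)+O(1)$; letting $N\to\infty$ (along the subsequence on which Theorem~\ref{Theorem11.29} delivers the structural fact) yields $\dim_{M}G\le(m-1)\frac{\ln\chi}{\ln\chi-\ln\varrho}$, and $\chi\to a^{+}$ finishes.

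The main obstacle is exactly the passage from the \emph{linear} images $T^{N}(u-v)$ controlled by Theorem~\ref{Theorem11.29} to the \emph{nonlinear} images $f^{N}(u-v)=x_{0}-y$ that actually constitute $G$: Lemma~\ref{Lemma4.45} on its own makes the correction $e_{N}$ only a \emph{fixed} fraction of $|x_{0}-y|$, so a priori the normalized differences cluster merely within a constant angle of $W_{N}$, which is worthless in an infinite-dimensional space. The remedy is to exploit the $\Lambda$-th power in \eqref{1.16} by working at the finer scale $\delta_{N}^{\ast}\asymp\beta_{N}^{1/(\Lambda-1)}\delta_{N}$, where that constant fraction improves to $O(\beta_{N})$; a secondary bookkeeping point, handled by always measuring differences from a fixed centre of each ball, is that the unstable subspace $W_{N}(x_{0})$ varies with the base point.
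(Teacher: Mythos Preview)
Your proof is correct and follows essentially the same route as the paper: both work at the scale $\asymp\beta_{N}^{1/(\Lambda-1)}(\varpi/(L+C))^{N\Lambda/(\Lambda-1)}$ (the paper encodes this via a fixed $\gamma$ with $\gamma^{\Lambda-1}<\chi/\varrho$), use Lemma~\ref{Lemma4.45} to pass from $f^{N}$ to $T^{N}$, invoke Theorem~\ref{Theorem11.29} under hypothesis \eqref{3.56} to trap images near an $(m-1)$-dimensional span, and derive the self-similar covering inequality $K(\lambda_{N}r)\le K(r)\,K_{N}$ with $\lambda_{N}\asymp(\chi/\varrho)^{N}$ and $K_{N}\asymp\chi^{-N(m-1)}$. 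The iteration in $j$ that you write out explicitly is implicit in the paper through the free choice of the parameter $\Delta$, and your remark that the subspace $W_{N}(x_{0})$ depends on the basepoint matches the paper's per-ball choice of the vectors $z_{i}^{N}$.
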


\begin{proof}
Fix $\gamma \in (0,1),\chi \in ((\varpi )^{-1}\Xi (m,p)^{\frac{1}{mp}%
},\varrho )$, such that 
\begin{equation}
\gamma ^{\Lambda -1}<\frac{\chi }{\varrho }\text{.}  \label{10.39}
\end{equation}
For arbitrarily large integer $N$ and arbitrarily small $\Delta >0$,
consider a $\Delta \gamma ^{N}\frac{\varpi ^{\left( \frac{\Lambda }{\Lambda
-1}\right) N}}{\left( L+C\right) ^{\left( \frac{\Lambda }{\Lambda -1}\right)
N}}$ cover of $G$ with $P$ balls. Then by assumption we may pull it back to
a $\Delta \frac{\gamma ^{N}}{\iota }\frac{\varpi ^{\frac{N}{\Lambda -1}}}{%
\left( L+C\right) ^{\left( \frac{\Lambda }{\Lambda -1}\right) N}}$ cover of $%
f^{-N}(G)$, with centers $p_{\nu },\nu =1,...,P$.

{By Lemma \ref{Lemma4.45}, }we have that for fixed $p_{\nu }\in f^{-N}(G)$,
the vectors $p_{\nu }-y$, where $y\ $in the preimage of the corresponding
ball are such that $|p_{\nu }-y|\leq \Delta \frac{1}{\iota }\gamma ^{N}\frac{%
\varpi ^{\frac{N}{\Lambda -1}}}{\left( L+C\right) ^{\left( \frac{\Lambda }{%
\Lambda -1}\right) N}}$, satisfy the assumptions of Theorem {\ref%
{Theorem11.29}, and more specifically, } 
\begin{eqnarray*}
&&|f^{N}(p_{\nu })-f^{N}(y)-T^{N}\left( p_{\nu }-y\right) |\leq \eta \left(
L+C\right) ^{\Lambda N}|p_{\nu }-y|^{\Lambda } \\
&\leq &\eta \Delta ^{\Lambda }\frac{1}{\iota ^{\Lambda }}\gamma ^{\Lambda N}%
\frac{\varpi ^{\left( \frac{\Lambda }{\Lambda -1}\right) N}}{\left(
L+C\right) ^{\left( \frac{\Lambda }{\Lambda -1}\right) N}}.
\end{eqnarray*}%
From Theorem {\ref{Theorem11.29}} and our hypotheses, we know there exist $%
m-1$ unit vectors $z_{i}^{N},i=1,...,m-1$ st any other unit vector parallel
to $T^{N}\left( p_{\nu }-y\right) $ will be in an $\frac{\chi ^{N}}{%
\varkappa \varrho ^{N}-\chi ^{N}}$ neighborhood of their linear combinations
with coefficients of magnitude no more than $\frac{1}{\varkappa \varrho
^{N}-\chi ^{N}}$ in $Y_{0}$. The vectors $T^{N}\left( p_{\nu }-y\right) $
will be of magnitude at most$\left[ \Delta +\eta \Delta ^{\Lambda }\frac{1}{%
\iota ^{\Lambda }}\gamma ^{(\Lambda -1)N}\right] \gamma ^{N}\frac{\varpi
^{\left( \frac{\Lambda }{\Lambda -1}\right) N}}{\left( L+C\right) ^{\left( 
\frac{\Lambda }{\Lambda -1}\right) N}}$ (so we may scale down by this
factor).

The dimension bounds now follow from wellknown techniques (cf. \cite%
{cft,eden,eft,gt,mane,th,tha} and \cite[p. 367]{t}).

Taking into account \eqref{10.39}, we now can cover each original ball with
new balls of radius $\varsigma \gamma ^{N}\frac{\varpi ^{\left( \frac{%
\Lambda }{\Lambda -1}\right) N}}{\left( L+C\right) ^{\left( \frac{\Lambda }{%
\Lambda -1}\right) N}}\left( \frac{\chi }{\varrho }\right) ^{N}$ (where $%
\varsigma $ is a constant) centered at the points $(a_{1},...,a_{m-1})=\left[
\Delta +\eta \Delta ^{\Lambda }\frac{1}{\iota ^{\Lambda }}\gamma ^{(\Lambda
-1)N}\right] \gamma ^{N}\frac{\varpi ^{\left( \frac{\Lambda }{\Lambda -1}%
\right) N}}{\left( L+C\right) ^{\left( \frac{\Lambda }{\Lambda -1}\right) N}}%
\frac{\chi ^{N}}{\varkappa \varrho ^{N}-\chi ^{N}}%
\sum_{i=1}^{m-1}a_{i}z_{i}^{N}$, where $a_{i}$ take values $%
0,1,...,\left\lceil \frac{1}{\chi ^{N}}\right\rceil $ and with an
appropriate shift so that $f^{N}(p_{\nu })$ is the origin in $Y_{0}$. For
example, 
\begin{equation*}
\varsigma =100\Delta \left[ 1+\eta 100^{\left( \Lambda -1\right) }\frac{1}{%
\iota ^{\Lambda }}\frac{\gamma }{\left( \frac{\chi }{\varrho }\right) ^{100}}%
^{(\Lambda -1)100}\right] (m+100)\left( \frac{1}{\varkappa }+100\right) ,
\end{equation*}%
will certainly suffice.

We have for large enough $N$ constructed a cover of $G$ with only $P\left(
\left\lceil \frac{1}{\chi ^{N}}\right\rceil +1\right) ^{m-1}$ such balls,
thus we get for some constant $\xi :=\varsigma /\Delta >0$ which does not
depend on $N$ (that fact that we can take this constant to be large enough
but independent of $N$, also means we need not be concerned with the
difference between defining Minkowski dimension with balls whose centers are
in $G$, as opposed to the ambient space $Y_{0}$, cf. \cite{mane}) 
\begin{eqnarray*}
K\left( \varsigma \gamma ^{N}\frac{\varpi ^{\left( \frac{\Lambda }{\Lambda -1%
}\right) N}}{\left( L+C\right) ^{\left( \frac{\Lambda }{\Lambda -1}\right) N}%
}\left( \frac{\chi }{\varrho }\right) ^{N}\right) &\leq &K(\Delta \gamma ^{N}%
\frac{\varpi ^{\left( \frac{\Lambda }{\Lambda -1}\right) N}}{\left(
L+C\right) ^{\left( \frac{\Lambda }{\Lambda -1}\right) N}})\left(
\left\lceil \frac{1}{\chi ^{N}}\right\rceil +1\right) ^{m-1}, \\
K\left( \Delta \gamma ^{N}\frac{\varpi ^{\left( \frac{\Lambda }{\Lambda -1}%
\right) N}}{\left( L+C\right) ^{\left( \frac{\Lambda }{\Lambda -1}\right) N}}%
\left\{ \left( \frac{\chi }{\varrho }\right) ^{N}\xi \right\} \right) &\leq
&K(\Delta \gamma ^{N}\frac{\varpi ^{\left( \frac{\Lambda }{\Lambda -1}%
\right) N}}{\left( L+C\right) ^{\left( \frac{\Lambda }{\Lambda -1}\right) N}}%
)\left( \left\lceil \frac{1}{\chi ^{N}}\right\rceil +1\right) ^{m-1}.
\end{eqnarray*}%
Hence 
\begin{equation*}
\limsup_{\epsilon \rightarrow 0^{+}}\frac{\ln K(\epsilon )}{-\ln (\epsilon )}%
\leq \liminf_{N\rightarrow \infty }\frac{\ln \left( \left\lceil \frac{1}{%
\chi ^{N}}\right\rceil +1\right) ^{m-1}}{|\ln (\xi \left( \frac{\chi }{%
\varrho }\right) ^{N})|}=\frac{(m-1)\ln \chi }{\ln \chi -\ln \varrho }.
\end{equation*}%
Letting $\chi $ tend to $\varpi ^{-1}\Xi (m,p)^{\frac{1}{mp}}$ completes the
proof.
\end{proof}

\subsection{\protect\bigskip General comments and possible generalizations}

\bigskip Corollary {\ref{Corollary11.35}} is reminiscent of the bounds on
admissible perturbations of hyperbolic systems so that hyperbolicity is
preserved (see e.g. \cite{cl} and the references therein). Though note the
slight difference between the derivative in a nonlinear system and an
original linear system considered wrt its nonlinear perturbation.

There is a close relation between $A$ in the proof of Theorem {\ref%
{Theorem11.29}} and other definitions of \textquotedblleft
angle\textquotedblright , such as minimum distance on the unit sphere \cite%
{clark, pasq, mil},\cite[Chapter 1, Section 11]{ms}. Unfortunately, in the
nonarchimedean case, the residue class field does not provide an analog of
the unit sphere that would be useful for describing distances between
directions. For this reason, in Definition {\ref{Definition7.19}} we have
opted for a metric analog, $S_{X}$. What the optimal way of reconciling the
algebraic and metric structures would be, is not obvious.

The difference between autonomous and nonautonomous systems, which can
considered to lie in the existence/attainment of a limit of growth estimates
in the autonomous case (as opposed to a limit superior in the nonautonomous
case), merits further investigation. The usage of lifts (cf. \cite{grs, how,
lms}, \cite[Chapter 3]{cl}) to establish or explicitly calculate the growth
estimates of nonautonomous systems is an interesting challenge. It could be
fruitful to combine the above \textit{a priori }estimates with the knowledge
that a limit exists in the sense of a Lyapunov exponent of an autonomous
lift.

All the above computations heavily relied on the existence of a norm, and
the availability of dynamical information encoded in such. This may be a
drawback if a norm does not exist, or if the structure associated to the
norm does not easily reveal any growth estimates. Locally convex spaces may
be dealt with as limits of Banach spaces, and one should be optimistic
concerning estimates of diametral dimension \cite{kolm}. The results and
techniques developed in e.g. \cite{abdj, bd,d17,d, dfw, dg, ter} could be
useful in this regard. On the other hand, in certain algebraic structures,
e.g. when the valuation of $\mathbb{K}$ is trivial, it may be challenging to
attach quantitative estimates of growth/decay. In such cases it may be
desirable to work with the Berkovich spectrum \cite{bak, br, ben, berk}.
Concerning the nonarchimedean case, it should be noted that similar notions
of determinant or volume were recently explored in relation to
Arakelov-Green functions and the Monge-Amp\`{e}re equation (see e.g. \cite%
{dis, be, bgm,loop} and the references therein).

\section{\protect\bigskip Growth/Decay Estimates for Delay Equations}

\subsection{Framework and bounds for linear equations}

\bigskip We now calculate growth estimates for delay equations, directly
relating the delay to Condition {\ref{Condition4.55}. We will be focusing on
solutions taking values in a finite dimensional Banach (Minkowski) space. As
differentiation in complex Banach space is rather special, for the sake of
simplicity, we will restrict our attention to real Banach space, noting that
much the same holds for the complex case. }

Let $E$ denote a real Banach space, $\tau $ a fixed positive constant, and $%
\mathscr{C}\left( [-\tau ,0];E\right) $ the continuous functions from $%
[-\tau ,0]$ to $E$. We assume that $F:%
\mathbb{R}
\times \mathscr{C}\left( [-\tau ,0];E\right) \rightarrow E$ satisfies the
Caratheodory conditions

\begin{itemize}
\item the function $t\mapsto F(t,0)$ is locally Lebesgue (Bochner)
integrable.

\item $F(\cdot ,x)$ is Lebesgue (Bochner) measurable $\forall x\in %
\mathscr{C}\left( [-\tau ,0];E\right) ,$

\item there exists a locally integrable function $n:%
\mathbb{R}
\rightarrow \lbrack 0,+\infty )$ (defined everywhere) such that $\forall
x,y\in \mathscr{C}\left( [-\tau ,0];E\right) $ we have 
\begin{equation}
|F(t,x)-F(t,y)|\leq n(t)|x-y|.  \label{8.08}
\end{equation}
\end{itemize}

If $x\in \mathscr{C}([a-\tau ,b];E)$, where $b\geq a$, we write $%
x_{t}:=x(t+\theta ),\theta \in \lbrack -r,0],\forall t\in \lbrack a,b]$. By
a \textit{solution} of 
\begin{equation}
x^{\prime }(t)=F(t,x_{t}),  \label{7.41}
\end{equation}%
on $[t_{0},+\infty )$ corresponding to the initial function $\phi \in %
\mathscr{C}\left( [-\tau ,0];E\right) $, we understand the unique function $%
x:[t_{0}-\tau ,+\infty )\rightarrow E$, which\ is locally absolutely
continuous on $[t_{0},+\infty )$, such that $x_{t_{0}}(\cdot )=\phi (\cdot
+t_{0})$ and 
\begin{equation*}
x(t)=x(t_{0})+\int_{t_{0}}^{t}F(s,x_{s})ds
\end{equation*}%
holds for $t\geq t_{0}$. The existence and uniqueness of solutions under the
Caratheodory conditions is well-known \cite{bpdm,dm}. Moreover, under such
assumptions, for each $T>0$ there exists a positive constant $c(T)$ such
that given any initial functions $\varphi ,\psi $, and corresponding
solutions $x,y$ respectively, we have 
\begin{equation}
|x-y|_{[0,T]}+|x^{\prime }-y^{\prime }|_{[0,T]}\leq c(T)|\varphi -\psi |%
\text{.}  \label{6.13}
\end{equation}%
We also write $x_{t}(\phi )\in \mathscr{C}\left( [-\tau ,0];E\right) $ and $%
x(t;\phi )\in E$ for the solution corresponding to initial function $\phi $.

By $Q_{n}$, we will denote the solution operator $Q_{n}:\mathscr{C}\left(
[-\tau ,0];E\right) \circlearrowleft $, which associates to each initial
function $\phi \in \mathscr{C}\left( [-\tau ,0];E\right) $, its
corresponding solution on $[n\tau ,(n+1)\tau ],$ for $n\in 
\mathbb{Z}
$. In this section, we will assume $F(t,\cdot )$ is linear, along with
integral boundedness:%
\begin{equation}
\sup_{t\in 
\mathbb{R}
}\int_{t}^{t+\tau }n(s)ds=r<+\infty \text{,}  \label{11.25}
\end{equation}%
and show that (up to a rescaling of time) it implies 
\begin{equation}
|F(s,\cdot )|\leq 1,\forall s\geq 0.  \label{11.29}
\end{equation}

\begin{lemma}
\label{Lemma5.26}Assume $p:%
\mathbb{R}
\rightarrow \lbrack 0,+\infty )$ is locally Lebesgue integrable. Set 
\begin{eqnarray*}
f(t) &:&=\int_{0}^{t}p,t\in 
\mathbb{R}
\\
g(t) &=&\inf \{s\geq 0\colon f(s)=t\},t\in 
\mathbb{R}
.
\end{eqnarray*}%
Then for any locally Bochner integrable $H:%
\mathbb{R}
\rightarrow E,$ and $a,b\in 
\mathbb{R}
$, the function $\frac{1}{p(g(s))}H(g(s)),s\in \lbrack f(a),f(b)]$ is
integrable, and%
\begin{equation*}
\int_{f(a)}^{f(b)}\frac{1}{p(g(s))}H(g(s))ds=\int_{[a,b]\cap
\{x|g(f(x))=x\}}H(t)dt.
\end{equation*}
\end{lemma}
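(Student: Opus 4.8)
The plan is to read the left-hand side as a disguised change of variables for the absolutely continuous nondecreasing function $f$ (recall $f'=p$ a.e.), with all the delicacy concentrated on the flat pieces of $f$, i.e. on the set $\{p=0\}$. We may assume $0\le a\le b$: since $f(s)=\int_0^s p\ge 0$ for $s\ge 0$, for $x<0$ the set $\{s\ge 0:f(s)=f(x)\}$ is empty or has infimum $\ge 0>x$, so $x\notin\{x:g(f(x))=x\}$; hence the right-hand integral is supported in $[0,\infty)$, and only $g(s)$ for $s\ge 0$ is relevant on the left.

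First I would record the routine facts about $g$. For $t$ in the range $R:=f([0,\infty))$ the closed set $\{s\ge 0:f(s)=t\}$ is nonempty and bounded below, so its infimum $g(t)$ is attained; hence $f(g(t))=t$, and $g$ is nondecreasing and left-continuous on $R$. Put $A:=\{x\ge 0:g(f(x))=x\}$. Then $A=g(R)$, the map $f|_A$ is injective with inverse $g$, and if $x\ge 0$ with $x\notin A$ then $g(f(x))<x$ and $f$ is constant on $[g(f(x)),x]$; consequently $[0,\infty)\setminus A$ is a countable union of half-open intervals on which $f$ is locally constant, so $f'=p=0$ a.e. there. Next I would dispose of the set where the integrand is not literally defined. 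Being absolutely continuous, $f$ enjoys Luzin's property (N) and $f(\{f'=0\})$ is Lebesgue-null; since $f'=p$ a.e., the set $f(\{p=0\})$ is Lebesgue-null, and it contains $\{s\in[f(a),f(b)]:p(g(s))=0\}$ (because there $f(g(s))=s$). Hence $s\mapsto \frac{1}{p(g(s))}H(g(s))$ is defined for a.e. $s\in[f(a),f(b)]$.

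With this in hand I would prove the identity first for $E=\mathbb{R}$ and $H\ge 0$ Borel. The image under $f$ of the measure with density $f'$ on $[a,b]$ is Lebesgue measure on $[f(a),f(b)]$ — the two agree on the intervals $[f(a),c]$ by continuity of $f$, hence everywhere — so
\[
\int_{f(a)}^{f(b)}\phi(s)\,ds=\int_a^b\phi\bigl(f(x)\bigr)\,f'(x)\,dx
\]
for every Borel $\phi:[f(a),f(b)]\to[0,+\infty]$. Applying this with $\phi(s)=\frac{1}{p(g(s))}H(g(s))$: for $x\in A$ we have $g(f(x))=x$, so $\phi(f(x))\,f'(x)=\frac{H(x)}{p(x)}\,p(x)=H(x)$ at a.e.\ point of $A$ where $p>0$, while at a.e.\ point of $[0,\infty)\setminus A$ one has $f'(x)=0$, so $\phi(f(x))f'(x)=0$. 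This gives the asserted identity (and, taking $H=\|H\|$, the integrability claim). The general scalar case follows by splitting $H=H^+-H^-$; the case of $E$-valued simple $H$ by linearity; and the general Bochner-integrable case by approximating $H$ in $L^1$ by simple functions $H_n$ and passing to the limit, dominating $\frac{1}{p(g(s))}\|H_n(g(s))-H(g(s))\|$ and using the scalar identity for $\|H_n-H\|$.

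The one genuine obstacle is the non-injectivity of $f$: one must simultaneously control the $s$-measure of the flat levels of $f$ (this is exactly where Luzin's property (N) for $f$ and the nullity of $f(\{f'=0\})$ are used) and show that $[0,\infty)\setminus A$ contributes nothing to the right-hand integral (the role of $f'=0$ there). A small point worth flagging: the argument above produces the identity with $\{x:g(f(x))=x\}$ intersected with $\{p(x)\neq 0\}$ on the right, since on $A\cap\{p=0\}$ one has $f'=0$ a.e.\ and that set is invisible to the change of variables; in the intended applications $H$ coincides a.e.\ on $\{p=0\}$ with the derivative of a solution of the (linear) equation and hence vanishes there, so the distinction is immaterial.
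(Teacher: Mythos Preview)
Your approach is essentially the paper's: establish that $p(g(s))>0$ a.e., that $p=0$ a.e.\ off $A=\{x:g(f(x))=x\}$, verify the identity for simple (or nonnegative) $H$, and pass to the limit; the paper compresses these steps into a citation and the phrase ``isomeasure between $([a,b]\cap A,\,p(t)\,dt)$ and $([f(a),f(b)],dt)$'', while you write out the change-of-variables and Luzin-(N) argument in full.

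Your closing caveat is not a defect of your argument but a genuine observation the paper's sketch omits: $A\cap\{p=0\}$ need not be null (take $p=1_{[0,1]\setminus C}$ for a fat Cantor set $C$; then $f$ is strictly increasing, $A=[0,1]$, and $A\cap\{p=0\}=C$), so the identity as stated can fail by $\int_{A\cap\{p=0\}}H$. What both you and the paper actually prove is the version with $A\cap\{p\neq 0\}$ on the right, and --- as you correctly note --- in the intended application $H$ vanishes on $\{p=0\}$ because $F$ is linear and $|F(t,\cdot)|\le n(t)$, so the discrepancy is immaterial there.
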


\begin{proof}
From the similar proof in \cite[Appendix B]{brst}, we have that $g$ has the
property that for any measurable $r$, if $r\circ g$ is well-defined, $r\circ
g$ is measurable. We also have $p(g(s))>0$ and $p|_{\{x|g(f(x))\neq x\}}=0$
a.e. Now the desired result follows easily for simple functions, and taking
the limit completes the proof. Alternatively, one could notice that $f,g$
describe an \textit{isomeasure} between $\left( [a,b]\cap
\{x|g(f(x))=x\},p(t)dt\right) $ and $\left( [f(a),f(b)],dt\right) $, in the
sense of \cite[Chapter 12]{pan}. Then the result would follow from \cite[%
Proposition 12.12]{pan}, whose proof is essentially the same, based on first
considering simple functions, then taking limits.
\end{proof}

\begin{theorem}
Assume \eqref {11.25}, and set 
\begin{eqnarray*}
f(t) &:&=%
\begin{array}{cc}
\int_{0}^{t}n(s)ds, & t\geq 0%
\end{array}
\\
g(t) &=&\inf \{s\geq 0\colon f(s)=t\},t\geq 0.
\end{eqnarray*}%
Assume $x$ is a solution of \eqref {7.41} on $[0,+\infty )$ and that $%
f(+\infty )=+\infty $. Consider the function $\tilde{x}$ defined by 
\begin{equation*}
\tilde{x}(s)=x(t)\text{ if and only if }s=f(t).
\end{equation*}%
Set $\tilde{x}_{s}:=\tilde{x}\left( \theta +s\right) ,\theta \in \lbrack
-r,0]$. Then $\tilde{x}$ is a solution of 
\begin{equation*}
\tilde{x}^{\prime }(s)=\widetilde{F}(s,\tilde{x}_{s}),s\geq r.
\end{equation*}%
The functional $\widetilde{F}$ is defined by 
\begin{equation*}
\widetilde{F}(s,\phi )=\frac{1}{n(g(s))}F\left( g(s),\psi \right)
\end{equation*}%
for $\phi \in \mathscr{C}([-r,0];E)$, where $\psi \left( \xi \right) $ is
defined so that $\phi \left( w\right) =\psi \left( \xi \right) ,w\in \lbrack
-r,0],\xi \in \lbrack -\tau ,0]$ iff $w+s=f(g(s)+\xi )$. It is linear,
Caratheodory, and has operator norm $|\widetilde{F}(s,\cdot )|\leq 1,\forall
s\geq 0$.
\end{theorem}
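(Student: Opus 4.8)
The plan is to show that the change of variables $s=f(t)$ transforms the delay equation \eqref{7.41} into the asserted equation, with the time-rescaled functional $\widetilde F$ having unit Lipschitz bound. The backbone is Lemma \ref{Lemma5.26} applied with $p=n$: that lemma guarantees that $g$ pulls measurable functions back to measurable functions, that $n(g(s))>0$ on the relevant set, and that for Bochner-integrable $H$ one has the substitution identity $\int_{f(a)}^{f(b)}\frac{1}{n(g(s))}H(g(s))\,ds=\int_{[a,b]\cap\{g(f(x))=x\}}H(t)\,dt$. First I would record that $f$ is nondecreasing, locally absolutely continuous with $f'=n$ a.e., that $f(+\infty)=+\infty$ by hypothesis, and that $g$ is a right-continuous nondecreasing right inverse of $f$ with $f(g(s))=s$ for all $s\ge 0$; hence $\tilde x(s):=x(g(s))$ is well-defined (on the set where $f$ is strictly increasing the definition $\tilde x(s)=x(t)\iff s=f(t)$ is unambiguous, and on plateaus of $f$ the value $x(t)$ is constant because $x'=F(t,x_t)$ with $|F(t,x_t)-F(t,0)|\le n(t)|x_t|$ and the bound on $t\mapsto F(t,0)$ forces $x'=0$ a.e. where $n=0$, so $\tilde x$ is still well-defined and locally absolutely continuous).

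Next I would verify the integral equation for $\tilde x$. Starting from $x(t)=x(0)+\int_0^t F(\sigma,x_\sigma)\,d\sigma$ and substituting $t=g(s)$, I apply Lemma \ref{Lemma5.26} with $H(\sigma)=F(\sigma,x_\sigma)$ (Bochner-integrable on compacta by the Caratheodory hypotheses together with \eqref{8.08}) to get
\begin{equation*}
\tilde x(s)=\tilde x(0)+\int_0^{s}\frac{1}{n(g(w))}F\bigl(g(w),x_{g(w)}\bigr)\,dw,
\end{equation*}
using that the discarded set $\{g(f(x))\ne x\}$ contributes nothing since $x'=0$ there. This already exhibits $\tilde x$ as locally absolutely continuous with $\tilde x'(s)=\frac{1}{n(g(s))}F(g(s),x_{g(s)})$ a.e. The remaining point is to rewrite $x_{g(s)}$ — the history segment of $x$ over $[g(s)-\tau,g(s)]$ — in terms of the history segment $\tilde x_s$ of $\tilde x$ over $[s-r,s]$. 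Here is where the relation $w+s=f(g(s)+\xi)$ defining $\psi$ from $\phi$ enters: for $s\ge r$ one has $f(g(s)-\tau)\le f(g(s))-r = s-r$ wait—more carefully, $f(g(s))=s$ and $f(g(s)-\tau)\ge s-\sup_{u}\int_{u}^{u+\tau}n = s-r$, so the segment $[s-r,s]$ in the new time corresponds, via $g$, into $[g(s)-\tau,g(s)]$ in the old time; reparametrizing $\tilde x$ on $[s-r,s]$ by $w=f(g(s)+\xi)-s$ recovers exactly $x$ on $[g(s)-\tau,g(s)]$, which is the statement $\phi(w)=\psi(\xi)$ iff $w+s=f(g(s)+\xi)$. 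Thus $F(g(s),x_{g(s)})=F(g(s),\psi)$ with $\psi$ the reparametrization of $\phi=\tilde x_s$, i.e. $\tilde x'(s)=\widetilde F(s,\tilde x_s)$.

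Finally, linearity and the Caratheodory property of $\widetilde F$ are inherited from $F$: linearity in $\phi$ is immediate since $\phi\mapsto\psi$ is linear, measurability in $s$ follows from measurability of $F(\cdot,\cdot)$ composed with the measurable map $g$ (Lemma \ref{Lemma5.26}), and local integrability of $s\mapsto\widetilde F(s,0)=\frac{1}{n(g(s))}F(g(s),0)$ follows again from Lemma \ref{Lemma5.26}. For the operator norm: from \eqref{8.08}, $|F(g(s),\psi_1)-F(g(s),\psi_2)|\le n(g(s))\,|\psi_1-\psi_2|_{\infty}$, and since the sup-norm is preserved under the reparametrization $\phi\mapsto\psi$ (it is just a monotone change of the argument), $|\psi_1-\psi_2|_\infty=|\phi_1-\phi_2|_\infty$, whence $|\widetilde F(s,\phi_1)-\widetilde F(s,\phi_2)|\le |\phi_1-\phi_2|_\infty$, i.e. $|\widetilde F(s,\cdot)|\le 1$. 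The main obstacle I anticipate is the bookkeeping around the plateaus of $f$ (where $n$ vanishes on a positive-measure set): one must check that $\tilde x$ is genuinely well-defined and absolutely continuous there, that the $1/n(g(s))$ factor is harmless because $g$ never lands in the interior of a plateau for a.e. $s$, and that the history-segment correspondence $w+s=f(g(s)+\xi)$ still makes sense — all of which rests on the a.e. statements $n(g(s))>0$ and $n|_{\{g(f(x))\ne x\}}=0$ furnished by Lemma \ref{Lemma5.26}.
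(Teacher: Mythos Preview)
Your argument is largely sound and in several respects more complete than the paper's own proof: you actually verify via Lemma~\ref{Lemma5.26} that $\tilde x$ satisfies the integral form of the new equation, whereas the paper treats this as self-evident and devotes its entire proof to the Caratheodory property of $\widetilde F$. Your handling of the plateau set $\{g(f(x))\ne x\}$ (using linearity to get $F(t,0)=0$ and hence $x'=0$ a.e.\ where $n=0$) is correct and fills a point the paper leaves implicit.

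There is, however, one place where you are too quick and where the paper does the real work: the measurability of $s\mapsto\widetilde F(s,\phi)$ for fixed $\phi$. You write that it ``follows from measurability of $F(\cdot,\cdot)$ composed with the measurable map $g$,'' but this overlooks that the second argument $\psi=\psi_s$ itself depends on $s$ through the reparametrization $w+s=f(g(s)+\xi)$. One needs $s\mapsto\psi_s$ to be measurable as a map into $\mathscr C([-\tau,0];E)$, and then to combine this with the Caratheodory hypothesis on $F$. The paper handles this by approximating $g$ pointwise by step functions $\xi_n$, observing that the corresponding $\nu_n$ are step functions in $L^1([a,b];\mathscr C([-\tau,0];E))$ converging a.e.\ to $\psi_s$, and concluding via $\lim_n F(g(s),\nu_n)=F(g(s),\psi_s)$ a.e. This is the one genuinely delicate point, and your proposal should incorporate it.

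A minor correction: the reparametrization $\phi\mapsto\psi$ need not preserve the sup-norm exactly, since as $\xi$ ranges over $[-\tau,0]$ the corresponding $w=f(g(s)+\xi)-s$ may fail to cover all of $[-r,0]$; one only has $|\psi|\le|\phi|$ (as the paper states). This is the direction you need for the Lipschitz bound, so your conclusion $|\widetilde F(s,\cdot)|\le 1$ is unaffected.
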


\begin{proof}
It is immediate that the map $\widetilde{F}(s,\cdot )$ is linear and the
above definition takes continuous $\phi $ to continuous $\psi $, of norm not
greater than that of $\phi $. Let us show it is Caratheodory, similarly to
the proof of \cite[p. 242]{bpdm}, \cite[Lemma 4.4]{dm}.

Consider an interval $[a,b]\subset \lbrack r,+\infty )$. Notice that for a
given pointwise assignment $s\mapsto \phi \in \mathscr{C}([-r,0];E)$, the
map $g(s)$ defines $\psi $ pointwise for $s\in \lbrack a,b],$ which we
consider as a function $[a,b]\longrightarrow \mathscr{C}\left( [-\tau
,0];E\right) $. For time-independent $\phi \in \mathscr{C}([-r,0];E)$,
because $g$ continuous except for countably many points and $f$ \ is
continuous, there exists a sequence of step functions $\nu _{n}\in
L^{1}\left( [a,b];\mathscr{C}\left( [-\tau ,0];E\right) \right) $ which
converge to $\psi $ a.e. in $L^{1}\left( [a,b];\mathscr{C}\left( [-\tau
,0];E\right) \right) $. We may set $\nu _{n}$ to be the $\psi \in
L^{1}\left( [a,b];\mathscr{C}\left( [-\tau ,0];E\right) \right) $
corresponding to $g$ replaced by a sequence of step functions $\xi _{n}$
tending (a.e. pointwise in $s$) to $g$ :%
\begin{equation*}
\phi \left( w\right) =\psi \left( \xi \right) ,w\in \lbrack -r,0],\xi \in
\lbrack -\tau ,0]\text{ iff }w+f(\xi _{n}(s))=f(\xi _{n}(s)+\xi ).
\end{equation*}%
Because $F$ is Caratheodory, we have $\lim_{n\rightarrow \infty }F\left(
g(s),\nu _{n}\right) =F\left( g(s),\psi \right) $ a.e.

Now, each $F\left( g(s),\nu _{n}\right) $ is measurable because $F$ is
Caratheodory (recall from \cite[Appendix B]{brst} that $g$ has the property
that for any measurable $r$, if $r\circ g$ is well-defined, $r\circ g$ is
measurable). Hence $\widetilde{F}(\cdot ,\phi )$ is measurable for fixed $%
\phi \in \mathscr{C}([-r,0];E)$.

The bound on the operator norm follows from \eqref {8.08}.
\end{proof}

\begin{remark}
Assume \eqref {11.29}. Given a solution $x$ of \eqref {7.41}, its norm in $E$
satisfies a first order scalar delay equation with bounded parameters.
Namely, 
\begin{equation*}
|\left\{ \frac{d}{dt}|x(t)|\right\} |\leq \max_{s\in \lbrack t-\tau
,t]}|x(s)|.
\end{equation*}%
Thus, also 
\begin{equation*}
\frac{d}{dt}|x(t)|=p(t)\max_{s\in \lbrack t-\tau ,t]}|x(s)|,
\end{equation*}%
where 
\begin{equation*}
p(t):=\left\{ 
\begin{array}{cc}
\frac{\frac{d}{dt}|x(t)|}{\max_{s\in \lbrack t-\tau ,t]}|x(s)|}, & 
\max_{s\in \lbrack t-\tau ,t]}|x(s)|>0 \\ 
0, & \max_{s\in \lbrack t-\tau ,t]}|x(s)|=0%
\end{array}%
\right. ,|p(t)|\leq 1.
\end{equation*}%
It is trivial that 
\begin{equation*}
\max_{s\in \lbrack t-\tau ,t]}|x(s)|=|x(t-\sigma (t))|,
\end{equation*}%
for some $\sigma (t)$ measurable satisfying $0\leq \sigma (t)\leq \tau $.
\end{remark}

\bigskip

\begin{lemma}[\protect\cite{bbs,kurz1, myshkispaper,myshkis, lillo,sb}]
\label{Lemma8.12 copy(1)}Assume \eqref {11.29} and let $x$ be a solution of %
\eqref {7.41} on $[0,\tau ]$ corresponding to initial function $\phi $. Let $%
i$ be a nonnegative integer, and assume $x(0+k\frac{\tau }{2^{i-1}}%
)=0,k=0,1,2,...,2^{i-1}$. Then if $\tau \leq 2^{i}$ we have \ 
\begin{equation*}
|x(t)|\leq \frac{\tau }{2^{i}}|\phi |,t\in \lbrack 0,\tau ]
\end{equation*}%
and if $\tau \geq 2^{i}$ we have 
\begin{equation*}
|x(t)|\leq \exp \left( \tau -2^{i}\right) |\phi |,t\in \lbrack 0,\tau ].
\end{equation*}
\end{lemma}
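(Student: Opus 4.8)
The plan is to base everything on the a priori bound that \eqref{11.29} forces along the solution of \eqref{7.41}: for almost every $t\in[0,\tau]$ one has $|x'(t)|=|F(t,x_t)|\le|x_t|=\max_{s\in[t-\tau,t]}|x(s)|$. Write $h:=\tau/2^{i-1}$ for the common length of the $2^{i-1}$ intervals $I_k:=[(k-1)h,\,kh]$ cut out by the hypothesised zeros $x(0)=x(h)=\dots=x(2^{i-1}h)=0$, put $\Phi:=|\phi|$, and observe that the delay window $[s-\tau,s]$ of any $s\in[0,\tau]$ lies inside $[-\tau,\tau]$, where $x$ coincides with $\phi$ over $[-\tau,0]$, so that $|x_s|\le\max\bigl(\Phi,\ \max_{[0,\tau]}|x|\bigr)$. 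Integrating $|x'|$ forward from the left endpoint and backward from the right endpoint of a subinterval $I_k$ gives, for $t\in I_k$, the two-sided bound $|x(t)|\le\min\bigl(\int_{(k-1)h}^{t}|x_s|\,ds,\ \int_{t}^{kh}|x_s|\,ds\bigr)\le\frac h2\sup_{s\in I_k}|x_s|$, together with the sharper one-sided bound $\max_{[(k-1)h,\,t]}|x|\le\int_{(k-1)h}^{t}|x_s|\,ds$.

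\textbf{The engine (base case $i=0$).}
I would first settle $i=0$, whose only hypothesis is $x(0)=0$ and whose bookkeeping drives the rest. With $\mu(t):=\max_{[0,t]}|x|$ the one-sided bound gives $\mu(t)\le\int_0^t\max(\Phi,\mu(s))\,ds$; hence $\mu(t)\le t\Phi$ as long as $\mu\le\Phi$, which already closes the case $\tau\le1$, while if $\mu$ first exceeds $\Phi$ at some $t_\ast\ge1$ then a Gronwall estimate on $[t_\ast,\tau]$ together with $t_\ast e^{1-t_\ast}\le1$ gives $\mu(\tau)\le e^{\tau-1}\Phi$. Running the same computation on a generic $I_k$, with $\Phi$ replaced by $B_{k-1}:=\max(\Phi,m_1,\dots,m_{k-1})$ where $m_j:=\max_{I_j}|x|$, produces a per-interval amplification $m_k\le c(h)B_{k-1}$ with an explicit $c(h)$ (one may take $c(h)=\frac12 e^{h-1}$ for $h\ge1$).

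\textbf{The two regimes.}
For $\tau\le2^i$, i.e.\ $h\le2$, the two-sided bound alone suffices: $M:=\max_{[0,\tau]}|x|$ satisfies $M\le\frac h2\max(\Phi,M)$ (every window being bounded by $\max(\Phi,M)$), and since $\frac h2\le1$ this forces $M\le\Phi$ — at once when $h<2$, and when $h=2$ after a short induction on $k$ showing $m_k\le\Phi$ (using that over the first unit-length piece of $I_k$ the delay window sees only values $\le\Phi$, which removes the equality case). Re-inserting $M\le\Phi$ gives $m_k\le\frac h2\Phi=\frac{\tau}{2^i}\Phi$, the first assertion. For $\tau\ge2^i$, i.e.\ $h\ge2$, the map $M\mapsto\frac h2\max(\Phi,M)$ is no longer contractive and the exponential has to be accumulated across the $2^{i-1}$ subintervals: one iterates $m_k\le c(h)B_{k-1}$, with $c(h)$ sharpened to $e^{h-2}$ by tracking the delay window against the running maximum on $I_k$ rather than using the crude two-sided bound, so that after $2^{i-1}$ steps $M\le e^{2^{i-1}(h-2)}\Phi=e^{\tau-2^i}\Phi$.

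\textbf{Main obstacle.}
The hard step is precisely this sharpening of $c(h)$ in the regime $h\ge2$: the crude value $\frac12 e^{h-1}$ coming from $|x(t)|\le\frac h2\sup_{s\in I_k}|x_s|$ loses too much, because the delay window $[s-\tau,s]$ re-enters itself in a non-contractive way, so one must account, subinterval by subinterval, for exactly how much of that window has already been forced small by the zero at $(k-1)h$. This is the step handled in the classical treatments \cite{myshkis,lillo,kurz1}, and redoing it in the present setting (arbitrary normed target, Caratheodory $F$) is where the real work is. I would organise the write-up with the first two paragraphs as the engine and the two regimes $h\le2$ and $h\ge2$ as the two halves of the proof.
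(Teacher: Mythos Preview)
The paper does not supply a proof of this lemma; it is stated with citations and used directly. Your outline therefore cannot be compared against an in-paper argument, but it does follow the classical route (Myshkis--Lillo--Kurzweil) and is essentially sound in the regime $h=\tau/2^{i-1}\le 2$: the two-sided integration from the adjacent zeros together with the bootstrap $M\le\tfrac h2\max(\Phi,M)$ gives $M\le\Phi$ and then $M\le\tfrac h2\Phi=\tfrac{\tau}{2^i}\Phi$; the borderline $h=2$ is recoverable exactly by combining the forward bound $\nu(t)\le t'_*\Phi$ (valid while $\nu\le\Phi$) with the backward bound to pin $t'_*=t^{*\prime}=1$.

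The one genuine gap is your mechanism for the sharpening $c(h)=e^{h-2}$ in the regime $h\ge 2$. ``Tracking the delay window against the running maximum'' --- i.e.\ the forward Gronwall $\nu(t)\le t'_*\,e^{\,t'-t'_*}B_{k-1}$ with $t'_*\ge 1$ --- by itself only gives $m_k\le e^{h-1}B_{k-1}$ (the worst case is $t'_*=1$), which after $2^{i-1}$ iterations yields $e^{\tau-2^{i-1}}$, not $e^{\tau-2^i}$. What is missing is precisely the information from the zero at the \emph{right} endpoint: integrating backward from $x(kh)=0$ gives $m_k\le(h-t^{*\prime})\max(B_{k-1},m_k)$, hence (when $m_k>B_{k-1}$) the location of the maximum satisfies $t^{*\prime}\le h-1$. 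Feeding this into the forward bound yields
\[
m_k=\nu(t^{*})\le t'_*\,e^{\,t^{*\prime}-t'_*}B_{k-1}\le e^{(h-1)-1}B_{k-1}=e^{h-2}B_{k-1},
\]
and then the iteration $B_k\le e^{h-2}B_{k-1}$ gives $M\le e^{2^{i-1}(h-2)}\Phi=e^{\tau-2^i}\Phi$. So the sharpening does not come from replacing the two-sided bound, but from \emph{combining} the forward running-maximum estimate with the backward constraint on the location of the maximum. (Your interim value $c(h)=\tfrac12 e^{h-1}$ also does not arise from any of the bounds you wrote down; it seems to be a slip.) Once this is said, your roadmap is complete.
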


\begin{corollary}
\bigskip \label{Corollary3.43}Assume \eqref {11.29} and that $E$ has finite
dimension $d$. Then (uniformly for any $n\in 
\mathbb{Z}
$) the operator $Q_{n}$ is compact, and there exist subspaces $X^{i}$ of $%
\mathscr{C}\left( [-\tau ,0];E\right) $, such that 
\begin{eqnarray*}
\mathscr{C}\left( [-\tau ,0];E\right) &=&X^{0}\supset ...\supset
X^{i}\supset X^{i+1}\supset ... \\
\func{codim}X^{i} &\leq &k_{i}, \\
|Q_{n}|_{X^{i}}| &\leq &\rho _{i},
\end{eqnarray*}%
where 
\begin{equation*}
k_{i}=\left\{ 
\begin{array}{cc}
0, & i=0 \\ 
d, & i=1 \\ 
2d, & i=2 \\ 
\left( 2^{i-2}+1\right) d, & i>2%
\end{array}%
\right. ,
\end{equation*}%
and either 
\begin{equation*}
\rho _{i}=\left\{ 
\begin{array}{cc}
\exp \left( \tau \right) , & i=0 \\ 
\frac{\tau }{2^{i-1}}, & i\geq 1%
\end{array}%
\right. ,\text{ for }\tau \in (0,1],
\end{equation*}%
or 
\begin{equation*}
\rho _{i}=\left\{ 
\begin{array}{cc}
\exp \left( \tau \right) , & i=0 \\ 
\exp \left( \tau -2^{i-1}\right) , & i\leq \frac{\ln \tau }{\ln 2}+1 \\ 
\frac{\tau }{2^{i-1}}, & i>\frac{\ln \tau }{\ln 2}+1%
\end{array}%
\right. ,\text{ for }\tau \geq 1.
\end{equation*}
\end{corollary}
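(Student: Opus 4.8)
The plan is to construct the filtration $\{X^i\}$ directly from the interpolation-type vanishing conditions of Lemma~\ref{Lemma8.12 copy(1)}: although that Lemma is phrased on the reference interval $[0,\tau]$, it transfers verbatim to each window $[n\tau,(n+1)\tau]$ by translating time, because \eqref{11.29} is a bound uniform in $t$ (and the linearity of $F(t,\cdot)$ makes every $Q_n$ a bounded linear operator). \emph{Compactness} is the routine part. For $|\phi|\le 1$ let $x$ be the solution with $x_{n\tau}=\phi$ (placed on $[(n-1)\tau,n\tau]$); then $|x'(t)|=|F(t,x_t)|\le|x_t|$, and a Gr\"onwall argument for delay equations yields $|x(t)|\le e^{\tau}$, hence also $|x'(t)|\le e^{\tau}$, on $[n\tau,(n+1)\tau]$. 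Since $E$ is finite-dimensional, the Arzel\`{a}--Ascoli theorem makes $\{Q_n\phi:|\phi|\le1\}$ relatively compact in $\mathscr{C}([-\tau,0];E)$, the constant $e^{\tau}$ being independent of $n$. In particular $|Q_n|\le e^{\tau}=\rho_0$, so $X^0:=\mathscr{C}([-\tau,0];E)$ settles level $0$.

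For each fixed $n$ and each $i\ge 1$ I would set
\[
X^i:=\bigl\{\phi:\ x\bigl(n\tau+k\tau/2^{i-2}\bigr)=0,\ k=0,1,\dots,2^{i-2}\bigr\},
\]
$x$ always denoting the solution with $x_{n\tau}=\phi$, with the understanding that for $i=1$ this is just the single requirement $x(n\tau)=0$ (equivalently $\phi(0)=0$) and for $i=2$ the pair $x(n\tau)=x((n+1)\tau)=0$. Because the dyadic grid at level $i+1$ refines the one at level $i$, these conditions are nested, so $X^0\supset X^1\supset X^2\supset\cdots$. Each $X^i$ is the kernel of the linear map $\phi\mapsto\bigl(x(n\tau+k\tau/2^{i-2})\bigr)_k$ into $E^{\,2^{i-2}+1}$, which is continuous by \eqref{6.13}; hence $\operatorname{codim}X^i\le(2^{i-2}+1)d$, and this is precisely $k_i$ ($=d$ for $i=1$, $=2d$ for $i=2$, $=(2^{i-2}+1)d$ for $i>2$).

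The norm estimates come from feeding each $X^i$ into Lemma~\ref{Lemma8.12 copy(1)}. For $\phi\in X^i$ the time-translate $s\mapsto x(s+n\tau)$ solves, on $[0,\tau]$, an equation of the same type still obeying \eqref{11.29}, with initial function $\phi$, and it vanishes at the $2^{i-2}+1$ equally spaced points $0,\tau/2^{i-2},\dots,\tau$; applying that Lemma at \emph{its own} index $i-1$ gives $|x(t)|\le\rho_i|\phi|$ on $[n\tau,(n+1)\tau]$, where the dichotomy $\tau\le 2^{i-1}$ versus $\tau\ge 2^{i-1}$ reproduces exactly the displayed piecewise formula for $\rho_i$ (the two-line version when $\tau\in(0,1]$, the three-line version when $\tau\ge1$). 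Therefore $|Q_n\phi|=\sup_{t\in[n\tau,(n+1)\tau]}|x(t)|\le\rho_i|\phi|$, i.e.\ $|Q_n|_{X^i}|\le\rho_i$, which is all that is needed.

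The one point requiring care is the index shift between this corollary and Lemma~\ref{Lemma8.12 copy(1)}: level $i$ here has to be matched with level $i-1$ there, so that it carries $2^{i-2}+1$ evenly spaced zeros and yields the bound $\rho_i$; the small cases $i=1,2$ must be written out by hand so that the formulas for $k_i$ and $\rho_i$ line up. The remaining ingredients --- the delay Gr\"onwall estimate, the Arzel\`{a}--Ascoli step, and the codimension count as a rank inequality --- are entirely standard.
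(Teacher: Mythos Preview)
Your proposal is correct and matches the paper's intended argument: the paper states this result as a corollary of Lemma~\ref{Lemma8.12 copy(1)} with no written proof, and your construction of $X^i$ via dyadic vanishing conditions together with the Arzel\`a--Ascoli compactness step is precisely the natural fleshing-out of that implication. The only cosmetic point is that the special handling is really needed just for $i=1$ (where the Lemma's index $i-1=0$ is not literally covered by its statement); for $i=2$ the general formula $(2^{i-2}+1)d$ already gives $2d$ and Lemma index $1$ applies directly.
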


\begin{corollary}
\bigskip Assume \eqref {11.29} and that $E$ has finite dimension $d$. Assume
that \eqref {7.41} is stable in the following sense:%
\begin{equation}
\sup_{t\geq s}|x_{t}(\phi )|\leq M|\phi |,  \label{4.33}
\end{equation}%
where $x_{t}(\phi )\in \mathscr{C}\left( [-\tau ,0];E\right) $ is the
solution on $[s,+\infty )$ corresponding to initial function $\phi $, and %
\eqref {4.33} holds uniformly in $\phi \in \mathscr{C}\left( [-\tau
,0];E\right) ,s\in \lbrack 0,+\infty )$. Then the space of initial functions
corresponding to solutions which do not tend to zero is of dimension no
greater than $k_{i}$, if $\rho _{i}M<1$.
\end{corollary}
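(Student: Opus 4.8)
The plan is to pass to the discrete dynamics of the period maps $Q_n$ and to play the contraction factor $\rho_i$ on $X^i$ against the strict inequality $\rho_i M<1$. Write $Q^n:=Q_{n-1}\circ\cdots\circ Q_0$, so that $Q^n\phi=x_{n\tau}(\phi)$ is the state reached by evolving $\phi$ (given at time $0$) up to time $n\tau$, and put $\alpha(\phi):=\limsup_n|Q^n\phi|$. I would first record two facts. By Corollary \ref{Corollary3.43} applied at each time there is, for every $n$, a subspace $X^i_n\subset\mathscr{C}([-\tau,0];E)$ with $\operatorname{codim}X^i_n\le k_i$ and $|Q_n|_{X^i_n}|\le\rho_i$ (this subspace is allowed to depend on $n$). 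And from \eqref{4.33}, taking initial time $n\tau$ and initial function $Q^n\phi$, we get $\sup_{m\ge n}|Q^m\phi|\le M|Q^n\phi|$ for all $n\ge0$ and all $\phi$; in particular $|Q^n|\le M$ uniformly, and a solution $x_t(\phi)$ tends to $0$ precisely when $\alpha(\phi)=0$. Write $W$ for this closed linear set of initial functions.

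The heart of the argument is an eventual avoidance property: if $\alpha(\phi)>0$ then $Q^n\phi\notin X^i_n$ for all large $n$. Since $\rho_i M<1$ we have $\alpha(\phi)<\alpha(\phi)/(\rho_i M)$, so there is a least index $N_0(\phi)$ with $\sup_{n\ge N_0(\phi)}|Q^n\phi|<\alpha(\phi)/(\rho_i M)$. If $Q^n\phi\in X^i_n$ for some $n\ge N_0(\phi)$, then $|Q^{n+1}\phi|\le\rho_i|Q^n\phi|$, whence $\sup_{m\ge n+1}|Q^m\phi|\le M\rho_i|Q^n\phi|<\alpha(\phi)$, which is impossible because $\limsup_m|Q^m\phi|=\alpha(\phi)$. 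The strict inequality enters exactly here: it makes $\alpha(\phi)/(\rho_i M)$ lie strictly above $\alpha(\phi)$, so the estimate survives the stability factor $M$.

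Next I would make this threshold uniform over directions. Since $|Q^n|\le M$, both $\phi\mapsto\alpha(\phi)$ and $\phi\mapsto\sup_{n\ge N}|Q^n\phi|$ are $M$-Lipschitz, so for each $N$ the set $\{\phi:\sup_{n\ge N}|Q^n\phi|<\alpha(\phi)/(\rho_i M)\}$ is open and (its elements having $\alpha>0$) contained in $Y_0\setminus W$; these sets increase to $Y_0\setminus W$, so $N_0$ is upper semicontinuous there and bounded on every compact subset. Now let $V\subset Y_0$ be a subspace each of whose nonzero elements gives a non-decaying solution; it suffices to show $\dim V\le k_i$, so assume $\dim V\ge k_i+1$ and pass to a $(k_i+1)$-dimensional subspace. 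Its unit sphere $S_V$ is compact and contained in $Y_0\setminus W$, hence $N^*:=\sup_{S_V}N_0<\infty$. Each $Q^n|_V$ is injective (if $Q^n\phi=0$ then $|Q^m\phi|\le M|Q^n\phi|=0$ for $m\ge n$, so $\phi\in W$), so $\dim Q^nV=k_i+1$; since $\operatorname{codim}X^i_n\le k_i$, the subspace $Q^nV$ meets $X^i_n$ nontrivially, producing some $\phi\in S_V$ with $Q^n\phi\in X^i_n$. Choosing $n>N^*$ (so $n\ge N_0(\phi)$) contradicts the avoidance property. Hence $\dim V\le k_i$ for every such $V$, i.e. $\operatorname{codim}W\le k_i$, which is the claimed bound.

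The main obstacle is precisely the interplay of the last two steps: the per-trajectory time $N_0(\phi)$ coming from the contraction estimate is useless on its own, and the proof closes only because $\rho_i M<1$ leaves slack to absorb the stability constant $M$ and because $N_0$ is upper semicontinuous, so that compactness of $S_V$ furnishes a single time after which no nonzero direction in $V$ can land in $X^i$.
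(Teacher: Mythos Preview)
Your proof is correct and follows the same route the paper sketches: the paper's one-line proof invokes Corollary~\ref{Corollary3.43} together with the Bolzano--Weierstrass property of finite-dimensional spaces, and your argument makes this precise by using the subspaces $X^i_n$ from that corollary and the compactness of $S_V$ (via upper semicontinuity of $N_0$) to force a uniform avoidance time, yielding the contradiction.
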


\begin{proof}
Easy consequence of the Bolzano-Weierstrass property of finite dimensional
spaces, together with Corollary {\ref{Corollary3.43}.}
\end{proof}

\begin{remark}
Similar bounds may be obtained for any $n-$adic subdivision of the delay
interval, as for the dyadic subdivision above.
\end{remark}

\subsection{Variational equation}

We now proceed to the nonlinear case, where $F:%
\mathbb{R}
\times \mathscr{C}\left( [-\tau ,0];E\right) \rightarrow E$ again satisfies
the Caratheodory conditions, as in the previous section. The following proof
is partially inspired by \cite{hart,mik}.

\begin{theorem}
Assume $F(t,\cdot ):\mathscr{C}\left( [-\tau ,0];E\right) \circlearrowleft $
has a directional (Gateaux) derivative for every direction at every $x\in %
\mathscr{C}\left( [-\tau ,0];E\right) $, denoted $d_{2}F(t,x)[\cdot ]$. We
further assume that for each $x\in \mathscr{C}\left( [-\tau ,+\infty
);E\right) $ we have that $d_{2}F(t,x_{t})[\cdot ]$ has a norm locally
Lebesgue measurable wrt to time $t$, and such that $d_{2}F(\cdot ,x_{t})[y]$
is Lebesgue measurable for each $y\in \mathscr{C}\left( [-\tau ,0];E\right) $%
. Fix a solution $x_{t}(\phi )$ of \eqref {7.41} on $[0,+\infty )$ and set $%
V(t;\phi ,\xi )$ to be the solution of 
\begin{eqnarray*}
V^{\prime }(t) &=&\left[ d_{2}F(t,x_{t})\right] V_{t},t\geq 0 \\
V_{0} &=&\xi \in \mathscr{C}\left( [-\tau ,0];E\right) .
\end{eqnarray*}%
Then for any $\phi \in \mathscr{C}\left( [-\tau ,0];E\right) $, any $T>0$,
and $\xi \in \mathscr{C}\left( [-\tau ,0];E\right) $ we have 
\begin{equation*}
\lim_{|\xi |\rightarrow 0}\frac{1}{|\xi |}|x_{t}(\phi +\xi )-x_{t}(\phi
)-V_{t}(\phi ,\xi )|=0,
\end{equation*}%
uniformly for $t\in \lbrack 0,T]$.
\end{theorem}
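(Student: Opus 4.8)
The plan is to show that the ``remainder'' $w_t := x_t(\phi+\xi) - x_t(\phi) - V_t(\phi,\xi)$ satisfies a linear delay integral inequality with an inhomogeneous term that is $o(|\xi|)$, and then to close the estimate with a Gronwall-type argument on $[0,T]$. First I would write, for $t\in[0,T]$,
\[
w(t) = \int_0^t \Big( F(s,x_s(\phi+\xi)) - F(s,x_s(\phi)) - [d_2F(s,x_s(\phi))]\,V_s \Big)\,ds,
\]
and split the integrand by adding and subtracting $[d_2F(s,x_s(\phi))](x_s(\phi+\xi)-x_s(\phi))$. The first piece is then $[d_2F(s,x_s(\phi))]\,w_s$, which is controlled by $n(s)|w_s|$ using the measurability and bound hypotheses on $d_2F$ together with \eqref{8.08}-type control; the second piece,
\[
R(s) := F(s,x_s(\phi+\xi)) - F(s,x_s(\phi)) - [d_2F(s,x_s(\phi))](x_s(\phi+\xi)-x_s(\phi)),
\]
is the genuine remainder of the Gateaux linearization.

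The main obstacle is showing that $\int_0^T |R(s)|\,ds = o(|\xi|)$ as $|\xi|\to 0$, \emph{uniformly} in $s$ — the Gateaux derivative alone gives only directional differentiability, not a uniform $o(\cdot)$ bound along the whole curve $s\mapsto x_s(\phi+\xi)$. To handle this I would use \eqref{6.13} (the continuous dependence estimate $|x-y|_{[0,T]}+|x'-y'|_{[0,T]}\le c(T)|\varphi-\psi|$), which gives $|x_s(\phi+\xi)-x_s(\phi)|\le c(T)|\xi|$ uniformly for $s\in[0,T]$, so the increments $h_s := x_s(\phi+\xi)-x_s(\phi)$ lie in a ball of radius $c(T)|\xi|$ shrinking to $0$. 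Then $R(s)/|\xi|$ is bounded (by $2n(s)c(T)$ via the Lipschitz bound on $F$ and the operator-norm bound on $d_2F$), and tends to $0$ pointwise a.e.\ in $s$ by the definition of the directional derivative applied in the direction $h_s/|h_s|$ — care is needed because the direction itself varies with $\xi$, so one argues along subsequences and invokes the hypothesis that $d_2F(s,x_s)[\cdot]$ is genuinely linear and bounded, making the difference quotient converge locally uniformly in the direction. Dominated convergence (with dominating function $2n(s)c(T)\in L^1[0,T]$) then yields $\int_0^T |R(s)|/|\xi|\,ds \to 0$.

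Finally I would assemble the inequality
\[
|w(t)| \le \int_0^t n(s)\,|w_s|\,ds + \int_0^t |R(s)|\,ds =: \int_0^t n(s)\,|w_s|\,ds + \varepsilon(\xi),
\]
with $\varepsilon(\xi) = o(|\xi|)$, note that $|w_t| = \sup_{\theta\in[-\tau,0]}|w(t+\theta)|$ with $w\equiv 0$ on $[-\tau,0]$, and apply the standard Gronwall lemma for delay equations (using the integral boundedness of $n$, or just local integrability on $[0,T]$) to conclude
\[
|w_t| \le \varepsilon(\xi)\exp\!\Big(\int_0^T n(s)\,ds\Big), \qquad t\in[0,T].
\]
Dividing by $|\xi|$ and letting $|\xi|\to 0$ gives the claimed uniform limit. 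The existence and uniqueness of $V$ itself follows from the Caratheodory theory applied to the linear variational equation, using the measurability hypotheses on $d_2F$, so no separate argument is needed there.
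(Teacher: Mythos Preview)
Your proposal is correct and follows essentially the same route as the paper: the paper defines $\delta(t)=x(t;\phi+\xi)-x(t;\phi)-V(t;\phi,\xi)$ and $\omega(t;x,y)=F(t,y)-F(t,x)-[d_2F(t,x)](y-x)$ (your $w$ and $R$), derives the same integral inequality $|\delta(t)|\le\int_0^t|\omega|\,ds+\int_0^t n(s)|\delta_s|\,ds$, bounds $|\omega|/|\xi|\le 2c(T)n(s)$ via \eqref{8.08} and \eqref{6.13}, applies dominated convergence, and then closes with Gronwall. You are in fact slightly more careful than the paper about the point that Gateaux differentiability alone gives convergence only along fixed directions while here the direction $h_s/|h_s|$ varies with $\xi$; the paper simply asserts $\lim_{y\to x}|\omega(t;x,y)|/|x-y|=0$ without comment.
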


\begin{proof}
\bigskip We have for fixed $T>0$, 
\begin{eqnarray*}
x(t;\phi +\xi ) &=&\phi (0)+\xi (0)+\int_{0}^{t}d_{2}F(s,x_{s}(\phi +\xi
))ds, \\
V(t;\phi ,\xi ) &=&\xi (0)+\int_{0}^{t}\left[ d_{2}F(s,x_{s}(\phi ))\right]
V_{s}ds.
\end{eqnarray*}%
Set%
\begin{equation*}
\omega (t;x,y)=F(t,y)-F(t,x)-\left[ d_{2}F(t,x)\right] (y-x),t>0,x,y\in %
\mathscr{C}\left( [-\tau ,0];E\right) .
\end{equation*}%
Notice that by assumption we have (pointwise in time) 
\begin{equation}
\lim_{y\rightarrow x}\frac{|\omega (t;x,y)|}{|x-y|}=0.  \label{6.17}
\end{equation}%
We also set%
\begin{eqnarray*}
\delta (t) &=&x(t;\phi +\xi )-x(t;\phi )-V(t;\phi ,\xi ), \\
\delta _{t} &=&x_{t}(\phi +\xi )-x_{t}(\phi )-V_{t}(\phi ,\xi ), \\
\Delta (t) &=&x(t;\phi +\xi )-x(t;\phi ), \\
\Delta _{t} &=&x_{t}(\phi +\xi )-x_{t}(\phi ).
\end{eqnarray*}%
With the above notation we have%
\begin{eqnarray*}
\delta (t) &=&\int_{0}^{t}\left\{ F(s,x_{s}(\phi +\xi ))-F(s,x_{s}(\phi ))- 
\left[ d_{2}F(s,x_{s}(\phi )\right] V_{s}\right\} ds \\
&=&\int_{0}^{t}\left\{ \omega (s;x(t;\phi +\xi ),x(t;\phi ))-\left[
d_{2}F(s,x_{s}(\phi )\right] \delta _{t}\right\} ds.
\end{eqnarray*}%
The triangle inequality and \eqref {8.08} then give%
\begin{equation}
|\delta (t)|\leq \int_{0}^{t}|\omega (s;x(s;\phi +\xi ),x(s;\phi
))|ds+\int_{0}^{t}n(s)|\delta _{s}|ds.  \label{6.21}
\end{equation}%
Furthermore, we have by definition 
\begin{eqnarray*}
&&\frac{|\omega (s;x(s;\phi +\xi ),x(s;\phi ))|}{|\xi |} \\
&\leq &\frac{|F(s,x(s;\phi +\xi ))-F(s,x(s;\phi ))|}{|\xi |}+\frac{%
|d_{2}F(t,x(s;\phi ))||x(s;\phi +\xi )-x(s;\phi )|}{|\xi |}.
\end{eqnarray*}%
Applying \eqref {8.08}, \eqref {6.13}, the last inequality yields%
\begin{equation}
\frac{|\omega (s;x(s;\phi +\xi ),x(s;\phi ))|}{|\xi |}\leq 2c(T)n(s).
\label{6.16}
\end{equation}%
Now \eqref {8.08}, \eqref {6.17}, also imply%
\begin{eqnarray*}
\frac{|\omega (s;x(s;\phi +\xi ),x(s;\phi ))|}{|\xi |} &=&\frac{|\omega
(s;x(s;\phi +\xi ),x(s;\phi ))|}{|x(s;\phi +\xi )-x(s;\phi )|}\frac{%
|x(s;\phi +\xi )-x(s;\phi )|}{|\xi |} \\
&\leq &\frac{|\omega (s;x(s;\phi +\xi ),x(s;\phi ))|}{|x(s;\phi +\xi
)-x(s;\phi )|}c(T),
\end{eqnarray*}%
and%
\begin{equation}
\lim_{|\xi |\rightarrow 0}\frac{|\omega (s;x(s;\phi +\xi ),x(s;\phi ))|}{%
|\xi |}=0.  \label{6.18}
\end{equation}%
Now we apply the dominated convergence Theorem using \eqref {6.16}, %
\eqref{6.18}, obtaining 
\begin{equation}
\lim_{|\xi |\rightarrow 0}\frac{\int_{0}^{t}|\omega (s;x(s;\phi +\xi
),x(s;\phi ))|ds}{|\xi |}=0.  \label{6.20}
\end{equation}%
We rewrite \eqref {6.21} as%
\begin{equation*}
\frac{|\delta (t)|}{|\xi |}\leq \frac{\int_{0}^{t}|\omega (s;x(s;\phi +\xi
),x(s;\phi ))|ds}{|\xi |}+\int_{0}^{t}n(s)\frac{|\delta _{s}|}{|\xi |}ds,
\end{equation*}%
which, together with \eqref {6.20}, immediately gives the desired result.
\end{proof}

\section{Acknowledgements}

Many thanks are due to Alex Blumenthal and Mih\'{a}ly Pituk, for deep
discussions on the notion of \textquotedblleft determinant\textquotedblright
, \ to Ferenc Hartung for insights on the optimal requirements for the
variational equation, and to Matt Baker for advice on tackling certain
nonarchimedean challenges.

\end{document}